\newtheorem{theorem}{Theorem}[section]
\newtheorem{lemma}[theorem]{Lemma}
\newtheorem{prop}[theorem]{Proposition}
\newcommand{\Tr}{Tr}
\newcommand{\Z}{\mathbb{Z}}
\begin{document}
\title{Association schemes with given stratum dimensions:
on a paper of Peter M. Neumann}
\author{Marina Anagnostopoulou-Merkouri and Peter J. Cameron\\
\small{School of Mathematics and Statistics, University of St Andrews, 
St Andrews, Fife KY16 9SS, UK}}
\date{}
\maketitle

\begin{center}
In memory of Peter Neumann: teacher, colleague, friend
\end{center}

\begin{abstract}
In January 1969, Peter M. Neumann wrote a paper entitled ``Primitive
permutation groups of degree $3p$''. The main theorem placed restrictions
on the parameters of a primitive but not $2$-transitive permutation group
of degree three times a prime. The paper was never published, and the results
have been superseded by stronger theorems depending on the classification of
the finite simple groups, for example a classification of primitive groups of
odd degree.

However, there are further reasons for being interested in this paper. First,
it was written at a time when combinatorial techniques were being introduced
into the theory of finite permutation groups, and the paper gives a very 
good summary and application of these techniques. Second, like its predecessor
by Helmut Wielandt on primitive groups of degree $2p$, it can be 
re-interpreted as a combinatorial result concerning association schemes whose
common eigenspaces have dimensions of a rather limited form. This result 
uses neither the primality of $p$ nor the existence of a permutation group
related to the combinatorial structure. We extract these results and give
details of the related combinatorics.
\end{abstract}

\section{Introduction}

In 1956, Helmut Wielandt~\cite{wielandt:2p} proved the following result:

\begin{theorem}
Let $G$ be a primitive permutation group of degree $2p$, where $p$ is prime.
If $G$ is not $2$-transitive, then $n=2a^2+2a+1$ for some positive integer
$a$, and $G$ has rank~$3$ and subdegrees $a(2a+1)$ and $(a+1)(2a+1)$.
\end{theorem}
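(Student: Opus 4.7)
The plan is to combine a Sylow-theoretic analysis of $G$ with the combinatorial structure of the rank-$3$ association scheme that must arise, and then extract the parametric form from integrality of the eigenvalue multiplicities of the resulting strongly regular graph.

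First I would pin down a Sylow $p$-subgroup $P$ of $G$. Since $|G|$ divides $(2p)!$ and $p^2>2p$ for $p\geq 3$, we have $|P|=p$. Each $P$-orbit on $\Omega$ has size $1$ or $p$, so either $P$ has $p$ fixed points together with one regular orbit, or $P$ has two regular orbits $\Delta_1,\Delta_2$. The first case is ruled out by combining Jordan's theorem on primitive groups containing an element of prime order with many fixed points with the hypothesis that $G$ is not $2$-transitive. Hence $P$ has two orbits of size $p$.

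Second, I would show that $G$ has rank exactly $3$. The normaliser $N_G(P)$ permutes $\{\Delta_1,\Delta_2\}$, and primitivity of $G$ forces it to swap them, for otherwise $\{\Delta_1,\Delta_2\}$ would be a non-trivial system of blocks. Combining this with the fact that every $G_x$-suborbit (for $x\in\Delta_1$) has size dividing $|G_x|=|G|/(2p)$, and exploiting Wielandt's divisibility lemmas relating suborbit sizes to the $P$-orbit structure and to the $N_G(P)$-action on suborbits, a counting argument forces exactly two non-trivial suborbits, giving rank $3$.

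Third, the non-trivial orbital graph $\Gamma$ is then strongly regular on $v=2p$ vertices with valency $k$ and non-trivial eigenvalues $\theta_1,\theta_2$ of multiplicities $f,g$ satisfying $f+g=2p-1$. Since $f+g$ is odd, the conference case $f=g$ is impossible, so $\theta_1,\theta_2\in\Z$. A final arithmetic step---using that the values on an element of $P$ of the non-trivial constituents of the permutation character are sums of $p$-th roots of unity with non-negative integer coefficients, and must sum to $-1$ because $P$ is fixed-point-free on $\Omega$---together with integrality of $f,g$ and the standard identities $\theta_1+\theta_2=\lambda-\mu$ and $\theta_1\theta_2=\mu-k$, forces $\theta_1-\theta_2=1$. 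Writing $\theta_1=a$ and $\theta_2=-(a+1)$ then gives
\[
v-1 \;=\; (2a+1)^2, \qquad k \;=\; (a+1)(2a+1), \qquad v-1-k \;=\; a(2a+1),
\]
so $p=2a^2+2a+1$ with the asserted subdegrees.

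The main obstacle, and the technical heart of Wielandt's original paper, lies in step two: pinning down the exact suborbit structure demands a subtle interplay between primitivity, the Sylow structure of $G$, and Wielandt-style divisibility arguments on the orbitals. Once rank $3$ is secured, step three reduces to a standard (if fiddly) computation with the eigenvalues of the strongly regular graph, sharpened by what $p$-th roots of unity in the character values can contribute.
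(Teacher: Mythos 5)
This statement is Wielandt's theorem; the paper does not prove it but quotes it, citing \cite{wielandt:2p} and \cite{wielandt:book}, and records only that the proof splits into (i) showing the permutation character decomposes as $1_G+\chi_1+\chi_2$ with $\chi_1,\chi_2$ irreducible of degrees $p-1$ and $p$, and (ii) the purely combinatorial statement about strongly regular graphs on $2p$ vertices whose non-trivial eigenvalue multiplicities are $p-1$ and $p$ (Theorem~2.20 of \cite{cvl}). Your outline reproduces this architecture, but both genuinely hard steps are asserted rather than proved. Your step two (``a counting argument forces exactly two non-trivial suborbits'') is precisely the content of Wielandt's representation-theoretic argument and cannot be dispatched in a sentence; moreover the subsidiary claims you make en route are not sound as stated: $|P|=p$ does not follow from $p^2>2p$ (indeed $p^2\mid(2p)!$, so $|P|=p^2$ must be excluded separately, e.g.\ by the same Jordan-type argument applied to the kernel of $P$ on one orbit), and if $N_G(P)$ fixed both $\Delta_1,\Delta_2$ setwise this would not make $\{\Delta_1,\Delta_2\}$ a block system for $G$, since conjugate Sylow subgroups have different orbits.

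The more damaging gap is in step three. To extract $p=2a^2+2a+1$ you need the individual multiplicities of the two non-trivial eigenvalues to be $p-1$ and $p$; knowing only $f+g=2p-1$ (hence $f\ne g$ and integrality of the eigenvalues) leaves many other parameter sets for a strongly regular graph on $2p$ vertices. Pinning down $f$ and $g$ is exactly part (i): one must show $\deg\chi_1=p-1$ and $\deg\chi_2=p$, and your appeal to sums of $p$-th roots of unity adding to $-1$ only describes the restriction of $\pi-1_G$ to $P$, not the degrees of its irreducible constituents. There is also an internal slip: the identity you want is $\theta_1+\theta_2=\lambda-\mu=-1$, not $\theta_1-\theta_2=1$; with $\theta_1=a$ and $\theta_2=-(a+1)$ one has $\theta_1-\theta_2=2a+1$. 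Once the multiplicities are known, the cleanest route to the parameters is the trace argument this paper uses in the proofs of Theorems~\ref{thm-typeII} and~\ref{thm-typeIII}: reduce $\mathrm{Tr}(A)=k+f\theta_1+g\theta_2=0$ modulo $p$ to get $k\equiv\theta_2\pmod{p}$, then combine with $\mathrm{Tr}(A^2)=2pk$.
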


The proof of this theorem is also given in Chapter~$5$ of his
book~\cite{wielandt:book}. It illustrates an extension of the methods of
Schur rings using representation theory. He mentioned that, for $a=1$, we
have two examples: the groups $S_5$ and $A_5$, acting on the set of 
$2$-element subsets of $\{1,\ldots,5\}$.

Now it is possible to show that there are no others. For example, using the
Classification of Finite Simple Groups, all the finite primitive rank~$3$
permutation groups have been determined~\cite{kl,l:rk3,ls:rk3}, and the 
observation can be verified by checking the list.

However, there is more to be said. Wielandt's proof falls into two parts.
The first involves showing that the permutation character of $G$ decomposes
as $1_G+\chi_1+\chi_2$, where $1_G$ is the principal character of $G$ and
$\chi_1,\chi_2$ are irreducibles with degrees $p-1$ and $p$. It follows from
this that $G$ has rank~$3$ and is contained in the automorphism group of a
strongly regular graph, having the property that the eigenvalues of its
adjacency matrix have multiplicities $1$, $p-1$, and $p$. Now the argument
shows something much more general. Neither the existence of a rank~$3$ group
of automorpisms nor the primality of $p$ are needed.

First, a definition: a graph $\Gamma$ is \emph{strongly regular} with
parameters $(n,k,\lambda,\mu)$ if it has $n$ vertices, every vertex has $k$
neighbours, and two vertices have $\lambda$ or $\mu$ common neighbours
according as they are joined by an edge or not. Every rank~$3$ group of even
order is the automorphism group of a strongly regular graph, but not
conversely; many strongly regular graphs have no non-trivial automorphisms.
Any regular graph has the all-$1$ vector as an eigenvector; a regular graph is
strongly regular if and only if its adjacency matrix, acting on the space
orthogonal to the all-$1$ vector, has just two eigenvalues.

\begin{theorem}
Let $\Gamma$ be a strongly regular graph on $2n$ vertices, with the property
that the eigenvalues of the adjacency matrix, on the space of vectors 
orthogonal to the all-$1$ vector, have dimensions $n-1$ and $n$. Then either
\begin{enumerate}
\item $\Gamma$ is a disjoint union of $n$ complete graphs of size $2$, or
the complement of this; or
\item for some positive integer $a$, we have $n=2a^2+2a+1$, and up to
complementation the parameters of the graph $\Gamma$ are given by
\[n=(2a+1)^2+1,\quad k=a(2a+1),\quad \lambda=a(a+2),\quad \mu=(a+1)^2.\]
\end{enumerate}
\end{theorem}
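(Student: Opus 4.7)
The plan is to apply the standard parameter calculus of strongly regular graphs, leveraging the hypothesis $|f-g|=1$ on the two non-trivial eigenspace dimensions to cut the parameters down sharply. Let $k,r,s$ be the eigenvalues of the adjacency matrix of $\Gamma$, with $r$ and $s$ having multiplicities $f$ and $g$ on the orthogonal complement of the all-$1$ vector. By hypothesis $\{f,g\}=\{n-1,n\}$, so $f+g=2n-1$ and $f-g=\pm 1$. I would assemble the standard relations $r+s=\lambda-\mu$, $rs=\mu-k$, the trace relation $k+fr+gs=0$, and the combinatorial identity $k(k-\lambda-1)=(v-k-1)\mu$ with $v=2n$.

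First, writing $fr+gs=\tfrac{1}{2}[(f+g)(r+s)+(f-g)(r-s)]$, substituting the known values and squaring to eliminate $r-s=\sqrt{(\lambda-\mu)^2+4(k-\mu)}$ yields a polynomial relation among $k,\lambda,\mu,n$. Setting $u:=\lambda-\mu$ and using the combinatorial identity to solve $\mu=k(k-u-1)/(2n-1)$, then substituting back, should after routine simplification collapse everything to the clean Diophantine equation
\[(n-1)(2n-1)\,u^{2} + 4(n-1)\,uk + 2k(k-1) = 0,\]
which is a quadratic in $k$ with integer coefficients.

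Viewed as a quadratic in $k$, this has discriminant $4-8(n-1)\,u(u+2)$, which must be non-negative; since $u$ is an integer and $n\ge 2$, this forces $u(u+2)\le 0$, hence $u\in\{-2,-1,0\}$. Each value is then handled separately. For $u=0$ the equation gives $k\in\{0,1\}$, and after discarding the degenerate $k=0$ the unique solution is the perfect matching $nK_{2}$. For $u=-2$ one gets $k\in\{2n-2,2n-1\}$, and discarding the complete graph leaves the cocktail party graph, which is the complement of $nK_{2}$. For $u=-1$ the quadratic becomes $2k^{2}-2(2n-1)k+(n-1)(2n-1)=0$, with solutions $k=\tfrac{1}{2}\bigl[(2n-1)\pm\sqrt{2n-1}\bigr]$; integrality then forces $2n-1$ to be an odd perfect square $(2a+1)^{2}$, whence $n=2a^{2}+2a+1$. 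The two roots $k=a(2a+1)$ and $k=(a+1)(2a+1)$ are complementary, and the formulas $\mu=k^{2}/(2n-1)$ and $\lambda=\mu-1$ read off from the derivation deliver the stated parameters.

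I expect the main obstacle to be the algebraic simplification leading to the clean quadratic displayed above; once that is in hand, the rest is discriminant analysis followed by three short case checks.
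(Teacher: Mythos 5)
Your proof is correct, and it is worth noting that the paper does not actually prove this statement: Theorem 1.2 is quoted in the introduction with a reference to Theorem 2.20 of Cameron--van Lint, so your argument fills in what the paper leaves to a citation. Your route --- squaring the multiplicity relation $(f-g)(r-s)=-2k-(2n-1)(\lambda-\mu)$, eliminating $\mu$ via $k(k-\lambda-1)=(v-k-1)\mu$, and bounding $u=\lambda-\mu$ by the discriminant condition $4-8(n-1)u(u+2)\ge 0$ --- is the classical ``nearly equal multiplicities'' argument; I checked the key identity $(n-1)(2n-1)u^2+4(n-1)uk+2k(k-1)=0$ against the Petersen graph, $T(5)$, $5K_2$ and $K_{5\times 2}$ and it holds, and the three cases $u\in\{0,-1,-2\}$ are handled correctly (with the harmless caveat that you need $n\ge 2$ and should discard the empty and complete graphs). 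This is genuinely different from the method the authors use for their own $3n$-vertex analogues (Theorems 3.2--3.4), where they instead reduce the linear trace equation modulo $n$ to get $k\equiv s\pmod n$, invoke Perron--Frobenius to pin down $k-s=\epsilon n$, and then feed this into the quadratic trace equation; your approach avoids Perron--Frobenius entirely but is specific to the two-eigenvalue (strongly regular) situation, whereas theirs scales to the higher-rank coherent configurations they need later. One point of apparent disagreement is not your error but a typo in the theorem as printed: your derivation gives $\mu=k^2/(2n-1)$ and $\lambda=\mu-1$, so the degree $k=a(2a+1)$ goes with $(\lambda,\mu)=(a^2-1,a^2)$ (correct for the Petersen graph at $a=1$), while the displayed parameters $\lambda=a(a+2)$, $\mu=(a+1)^2$ belong to the complementary degree $k=(a+1)(2a+1)$; the combination printed in the statement fails the identity $k(k-\lambda-1)=(v-k-1)\mu$, so ``up to complementation'' your parameters are the right ones.
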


We are not aware of who first pointed this out. The result is given, for
example, as Theorem~2.20 in~\cite{cvl}.

In the case $a=1$, the complementary strongly regular graphs are the line
graph of the complete graph $K_5$ and the Petersen graph. But, unlike in
Wielandt's case, there are many others. For example, suppose that there exists
a Steiner system $S(2,a+1, 2a^2+2a+1)$. Then the strongly regular graph
whose vertices are the blocks, two vertices adjacent if the corresponding
blocks intersect, has the parameters given in the theorem. For example, when
$a=2$, the two Steiner triple systems on $13$ points give non-isomorphic
strongly regular graphs on $26$ vertices. (We discuss examples further in the
last section.)

\medskip

Now to the subject of this paper. In 1969, Peter Neumann wrote a long paper~\cite{pmn} extending Wielandt's result from $2p$ to $3p$, where $p$ is prime.
His conclusion is that, if such a group is not $2$-transitive, then $p$ is given
by one of three quadratic expressions in a positive integer $a$, or one of
three sporadic values; the rank is at most~$4$, and the subdegrees are given
in each case.

Like Wielandt's, Neumann's proof falls into two parts: first find the
decomposition of the permutation character, and then in each case find
the combinatorial implications for the structure acted on by the group. 
In contrast to Wielandt, the first part is much easier, since in the intervening
time, Feit~\cite{feit} had given a characterisation of groups with order
divisible by $p$ having a faithful irreducible representation of degree less
than $p-1$. On the other hand, the second part is much harder; rather than 
just one possible decomposition of the permutation character, he finds eight
potential decompositions, some of which require many pages of argument.

Again like Wielandt's, Neumann's conclusions have been superseded by results
obtained using the classification of finite simple groups. For example, all
the primitive permutation groups of odd degree have been
classified~\cite{k:odd,ls:odd}.

The paper was never published. It happened that both Leonard Scott and Olaf
Tamaschke had produced similiar results. There was a plan for Neumann and
Scott to collaborate on a joint paper, but for unknown reasons this never
happened. The authors are grateful to Leonard Scott~\cite{scott:pc} for
providing a scan of Peter Neumann's original typescript together with some
historical material about the proposed collaboration. The second author has
re-typed the paper and posted it on the arXiv~\cite{pmn2}.

Our task is to produce a combinatorial version of this, as we have seen for
Wielandt's theorem. We give some historical background to the theorem with
some comments on the place of Neumann's paper in the introduction of 
combinatorial methods into the study of permutation groups, and to check in
detail that his arguments give combinatorial results which do not depend
on either the existence of a primitive group or the primality of $p$. Indeed
we find some families of parameters which do not occur in Neumann's case
since the number of vertices is even.

\section{History}

The 1960s saw a unification of combinatorial ideas which had been developed
independently in three different areas of mathematics. In statistics,
R~C.~Bose and his colleagues and students developed the concept of an
\emph{association scheme}. Extracting information from experimental results
requires inversion of a large matrix, and Bose realised that the task would be
much simpler if the matrix belonged to a low-dimensional subalgebra of the
matrix algebra; requiring entries to be constant on the classes of an
association scheme achieves this. In the former Soviet Union, Boris Weisfeiler
and his colleagues were studying the graph isomorphism problem, and developed
the concept of a \emph{cellular algebra}, an isomorphism invariant of graphs,
to simplify the problem, and an algorithm, the \emph{Weisfeiler--Leman
algorithm}, to construct it. In Germany, Helmut Wielandt was extending the
method of \emph{Schur rings} to study permutation groups with a regular
subgroup; by using methods from representation theory he was able to dispense
with the need for the regular subgroup. These techniques were further developed
by Donald Higman in the USA, under the name \emph{coherent configuration}.

The three concepts are very closely related. We begin with Higman's definition.
A \emph{coherent configuration} consists of a set $\Omega$ together with a
set $\{R_1,R_2,\ldots,R_r\}$ of binary relations on $\Omega$ with the properties
\begin{enumerate}
\item $\{R_1,\ldots,R_r\}$ form a partition of $\Omega\times\Omega$;
\item there is a subset of $R_1,\ldots,R_r$ which is a partition of the
\emph{diagonal} $\{(\omega,\omega):\omega\in\Omega\}$ of $\Omega^2$;
\item the converse of each relation $R_i$ is another relation in the set;
\item for any triple $(i,j,k)$ of indices, and any $(\alpha,\beta)\in R_k$,
the number $p_{ij}^k$ of $\gamma\in\Omega$ such that $(\alpha,\gamma)\in R_i$
and $(\gamma,\beta)\in R_j$ depends only on $(i,j,k)$ and not on the choice
of $(\alpha,\beta)\in R_k$.
\end{enumerate}
The number $r$ is the \emph{rank} of the configuration.  
Combinatorially, a coherent configuration is a partition of the edge set of
the complete directed graph with loops.

A coherent configuration is \emph{homogeneous} if the diagonal is a single
relation. In the group case, this means that the group is transitive. All
the configurations in this paper will be homogeneous.

If $G$ is a permutation group on $\Omega$, and we take the relations $R_i$
to be the orbits of $G$ on $\Omega^2$, we obtain a coherent configuration.
This was Higman's motivating example, which he called the \emph{group case}.
Not every coherent configuration falls into the group case; indeed, our
task is to extend Neumann's results from the group case to the general case.

The notion of a cellular algebra is the same apart from an inessential small
difference (the diagonal is replaced by some equivalence relation). 
Association schemes form a special case, where all the relations $R_i$ are
symmetric. It follows that, in an association scheme, the diagonal is a single
relation. (Statisticians deal with symmetric matrices, for example
covariance matrices.)

A coherent configuration with rank $2$ is \emph{trivial}: one relation is
the diagonal, the other is everything else. For rank~$3$, we can suppose
without loss that $R_1$ is the diagonal. There are then two possibilities:
\begin{itemize}
\item $R_3$ is the converse of $R_2$. Then $R_2$ is a \emph{tournament}
(an orientation of the edges of the complete graph on $\Omega$); condition
(d) shows that it is a \emph{doubly regular} tournament~\cite{rb}.
\item $R_2$ and $R_3$ are symmetric. Then each is the edge set of a graph,
and these graphs are \emph{strongly regular}~\cite[Chapter~2]{cvl}.
\end{itemize}

The definition of coherent configuration has an algebraic interpretation.
Let $A_i$ be the \emph{adjacency matrix} of the relation $R_i$, the
$\Omega\times\Omega$ matrix with $(\alpha,\beta)$ entry $1$ if
$(\alpha,\beta)\in R_i$. Then $A_1,\ldots,A_r$ are zero-one matrices
satisfying the following conditions:
\begin{enumerate}
\item $A_1+\cdots+A_r=J$, the all-$1$ matrix;
\item there is a subset of these matrices whose sum is the identity $I$;
\item for any $i$ there is a $j$ such that $A_i^\top=A_j$;
\item $\displaystyle{A_iA_j=\sum_{k=1}^rp_{ij}^kA_k}$.
\end{enumerate}
Condition (d) says that the linear span over $\mathbb{C}$ of $A_1,\ldots,A_r$ 
is an algebra (closed under multiplication), and condition (c) implies that
this algebra is semi-simple. In the group case, it is the \emph{centraliser
algebra} of the permutation group, consisting of matrices which commute with
every permutation matrix in the group. In the case of association schemes, it
is known as the \emph{Bose--Mesner algebra} of the scheme. In this case, all
the matrices are symmetric, the algebra is commutative, and we can work over
$\mathbb{R}$. In the group case, the centraliser algebra is commutative if
and only if the permutation character is multiplicity-free.

If the algebra is commutative, then the matrices are simultaneously 
diagonalisable; the common eigenspaces are called the \emph{strata} of
the configuration. In the rank~$3$ case where we have a strongly regular
graph and its complement, the stratum dimensions are simply the multiplicities
of the eigenvalues. We occasionally extend the use of the word ``stratum''
to the non-commutative case, where it means a submodule for the algebra
spanned by the matrices which is maximal with respect to being a sum of
isomorphic submodules.

In all cases which arise in Peter Neumann's paper, the algebra turns out to
be commutative, although there are two potential cases where the permutation
character is not multiplicity-free; both of these are eliminated.

\medskip

It seems clear to the authors that, had the paper been published in 1969, it
would have been very influential: it provides both a clear account of the
theory and how it can be used to study permutation groups, and also a
non-trivial example of such an application. The second author of the present
paper read it at the start of his DPhil studies in Oxford under Peter Neumann's
supervision, and considers himself fortunate to have been given such a good
grounding in this area; he has worked on the interface of group theory and
combinatorics ever since.

\section{The results}

The main theorems in this paper are the following. They are numbered to
correspond to the eight cases in Neumann's paper.

\begin{theorem}\label{thm-typeI}
Let $\mathcal{A} = \{I_n, A_1, A_2\}$ be a coherent configuration of $n \times n$ matrices. If the eigenvalues of $A_1$ have multiplicities $1, \frac{n - 1}{2}, \frac{n - 1}{2}$ then one of the two following cases must hold:
\begin{itemize}
    \item $n \equiv 1 \pmod{4}$ and $A_1$ and $A_2$ are the adjacency matrices of conference graphs;
    \item $n \equiv 3 \pmod{4}$ and $A_1$ and $A_2$ are the adjacency matrices of doubly regular tournaments.
\end{itemize}
\end{theorem}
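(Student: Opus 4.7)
The strategy is to split into cases using the rank-$3$ dichotomy for homogeneous coherent configurations recalled in Section~2: either $A_1^\top = A_1$, in which case $A_1$ and $A_2$ are the adjacency matrices of a pair of complementary strongly regular graphs, or $A_1^\top = A_2$, in which case $A_1$ is the adjacency matrix of a tournament. Under the multiplicity hypothesis I would verify that each of these cases yields exactly one of the stated conclusions, with the congruence on $n$ modulo $4$ being forced in each case.

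In the tournament case, homogeneity of the coherent configuration forces the out-degree to be constant; combined with $A_1 + A_1^\top = J - I$ this gives common out-degree $(n-1)/2$, so $n$ is odd. Coherence forces $A_1 A_1^\top \in \mathrm{span}(I, A_1, A_1^\top)$, and since this product is symmetric the coefficients of $A_1$ and $A_1^\top$ must be equal; writing $A_1 A_1^\top = \bigl(\tfrac{n-1}{2} - \beta\bigr) I + \beta J$ and evaluating both sides on the all-ones vector pins $\beta = (n-3)/4$. Since $\beta$ is a structure constant counting common out-neighbours of distinct vertices, it is a non-negative integer, which forces $n \equiv 3 \pmod{4}$. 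One checks that $A_1$ is normal, so on the orthogonal complement of $\mathbf{1}$ the identities $A_1 A_1^\top = \tfrac{n+1}{4} I$ and $A_1 + A_1^\top = -I$ pin down the non-principal eigenvalues to $(-1 \pm i\sqrt{n})/2$, each with multiplicity $(n-1)/2$; this is exactly the spectrum of a doubly regular tournament.

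In the strongly regular graph case, let the three eigenvalues of $A_1$ be $k, r, s$ with multiplicities $1, f, g$ where $f = g = (n-1)/2$. The trace identity $k + fr + gs = 0$ and the standard relation $r + s = \lambda - \mu$ give $k = \tfrac{n-1}{2}(\mu - \lambda)$. Setting $d = \mu - \lambda \geq 1$ and combining with $\mathrm{tr}(A_1^2) = nk$ and $rs = \mu - k$, a short computation yields the closed form $\mu = d((n+1)d - 2)/4$. The trivial bound $\mu \leq n-2$ kills all $d \geq 2$, leaving $d = 1$ and the parameter set $(n, \tfrac{n-1}{2}, \tfrac{n-5}{4}, \tfrac{n-1}{4})$; integrality of $\mu$ then forces $n \equiv 1 \pmod{4}$, and these are precisely the conference graph parameters.

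The most delicate step I anticipate is in the symmetric case: converting the multiplicity condition $f = g$ into rigid conference parameters requires combining the first- and second-moment trace identities to produce a quadratic relation in $d$ that is then killed by the bound $\mu \leq n-2$. The tournament case, by contrast, reduces essentially to a single integrality constraint on one structure constant of the configuration.
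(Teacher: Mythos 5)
Your proposal is correct and follows essentially the same route as the paper: the same dichotomy between the symmetric (complementary strongly regular graphs) and antisymmetric (tournament) cases, with the equal-multiplicity/trace identity forcing $2k=(n-1)(\mu-\lambda)$ in the first case and the constancy of the structure constant counting common out-neighbours giving double regularity in the second. The only difference is that you derive the congruences $n\equiv 1$ or $3\pmod 4$ directly from integrality of $\mu=(n-1)/4$ and $\beta=(n-3)/4$, where the paper simply cites these as known facts about conference graphs and doubly regular tournaments.
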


\begin{theorem}\label{thm-typeII}
Let $G$ be a strongly regular graph on $3n$ vertices. If the multiplicities of the eigenvalues of $G$ are $1, n, 2n - 1$ then $G$ or its complement have the following parameters in terms of a non-negative integer $a$:
\begin{itemize}
    \item $3n = 144a^2 + 54a + 6$, $k_1 = 48a^2 + 14a + 1$, $\lambda = 16a^2 + 6a, \mu = 16a^2 + 2a$;
    \item $3n = 144a^2 + 90a + 15$, $k_1 = 48a^2 + 34a + 6, \lambda = 16a^2 + 10a + 1, \mu = 16a^2 + 14a + 3$;
    \item $3n = 144a^2 + 198a + 69$, $k_1 = 48a^2 + 62a + 20, \lambda = 16a^2 + 22a + 7, \mu = 16a^2 + 18a + 5$;
    \item $3n = 144a^2 + 234a + 96$, $k_1 = 48a^2 + 82a + 35, \lambda = 16a^2 + 26a + 10, \mu = 16a^2 + 30a + 14$.
\end{itemize}
\end{theorem}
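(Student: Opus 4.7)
The plan is to turn the hypothesis into a Diophantine system in the non-trivial eigenvalues of $G$, reduce (after complementation) to a single critical value of an auxiliary parameter, and then parametrise the resulting family.

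Write the eigenvalues of the adjacency matrix $A$ of $G$ as $k > r > s$, with multiplicities $1, f, g$, where $\{f, g\} = \{n, 2n - 1\}$. For $n \ge 2$ we have $f \ne g$, which forces $r$ and $s$ to be integers by the standard algebraic-integer argument for SRGs. I would combine the two trace identities $\operatorname{tr} A = 0$ and $\operatorname{tr}(A^2) = 3nk$ with the basic SRG relations $\mu = k + rs$ and $\lambda - \mu = r + s$, which together collapse to the single identity
\[ 3n\bigl[fr(r+1) + gs(s+1)\bigr] = fg(r - s)^2. \]
In Case~A ($f = n$, $g = 2n - 1$) this solves explicitly to
\[ n = \frac{2s^2 + 2rs - r^2 + 3s}{(r + 2s)(r + 2s + 3)}, \]
and Case~B ($f = 2n - 1$, $g = n$) is the symmetric statement obtained by interchanging the roles of $r$ and $s$.

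Let $m$ denote $r + 2s$ in Case~A and $2r + s$ in Case~B. In either case the numerator of $n$, rewritten in $m$ and the remaining eigenvalue, is the quadratic $-6s^2 + (6m + 3)s - m^2$ (respectively in $r$), while the denominator is $m(m+3)$. The next step is to analyse when $n \ge 1$: if $m(m+3) > 0$, the condition $n \ge 1$ combined with non-negativity of the discriminant of the quadratic forces $4m^2 + 12m - 3 \le 0$, which has no integer $m$ with $m(m+3) > 0$. Hence $m(m+3) \le 0$, so $m \in \{-3, -2, -1, 0\}$; the boundary values $m \in \{0, -3\}$ make the denominator vanish and, after tracking the degeneration back into the original relations, force $r = s$, which is impossible. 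Thus $m \in \{-1, -2\}$. A direct computation shows that complementation sends a Case~A graph with parameter $m$ to a Case~B graph with parameter $-3 - m$, so it interchanges $m = -1$ and $m = -2$; replacing $G$ by its complement if necessary, I may assume $m = -1$ in either Case~A or Case~B.

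With $m = -1$ in Case~A, the formula simplifies to $n = (6s^2 + 3s + 1)/2$, which is a positive integer precisely when $s$ is odd; combined with the ordering $r > s$ this forces $s = -(2t+1)$ for some integer $t \ge 0$, whence $r = 4t + 1$ and substitution gives
\[ n = 12t^2 + 9t + 2,\quad k = 12t^2 + 7t + 1,\quad \lambda = 4t^2 + 3t,\quad \mu = 4t^2 + t. \]
Writing $t = 2a$ recovers the first listed family, and $t = 2a + 1$ the third; the parallel derivation in Case~B, parametrised by $r = 2t + 1$, yields the second and fourth families. The main obstacle is the middle step pinning $m$ to $\{-1, -2\}$: the discriminant argument requires separating the two sign cases of $m(m+3)$, and the boundary degeneracies $m \in \{0, -3\}$ must be eliminated by a direct check. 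Once this is secured, the remainder is polynomial substitution, a split by the parity of $t$, and a routine verification that the resulting $(3n, k, \lambda, \mu)$ are feasible SRG parameters.
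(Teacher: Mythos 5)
Your proposal is correct: I checked the identity $v\bigl[fr(r+1)+gs(s+1)\bigr]=fg(r-s)^2$, the closed form $n=\frac{-6s^2+(6m+3)s-m^2}{m(m+3)}$ with $m=r+2s$, the discriminant bound forcing $m\in\{-1,-2\}$ after discarding the degenerate boundary values, the complementation map $m\mapsto -3-m$ between your Cases A and B, and the final parametrisation; all of it holds and reproduces the four families. The route is recognisably the same as the paper's at its core --- both arguments converge on the pair of relations $r=-1-2s$ and $2n=6s^2+3s+1$ (after complementing), and both use integrality of the eigenvalues via the non-conference-graph observation --- but you reach that pair differently and you unwind it differently. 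The paper reduces $\mathrm{tr}(A_i)=0$ modulo $n$ to get $k_i\equiv s_i\pmod n$, invokes its Perron--Frobenius lemma to conclude $k_i-s_i=\epsilon_i n$ with $\epsilon_i\in\{1,2\}$, and uses $\epsilon_1+\epsilon_2=3$ to normalise to $\epsilon_1=1$ (which is exactly your $m=-1$); you instead solve for $n$ as a rational function of $(r,s)$ and pin $m$ by an elementary positivity/discriminant estimate, which avoids the Perron--Frobenius lemma entirely. At the final stage the paper solves the quadratic for $s_1$ in terms of $n$, forcing $16n-5=3b^2$ and splitting on $b\bmod 16$ into four residue classes, whereas you solve for $n$ in terms of $s$, force $s$ odd, and split on the parity of $t$ inside each of the two cases for which eigenvalue carries multiplicity $2n-1$. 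Your organisation is slightly cleaner and makes the ``four families $=2\times 2$'' structure transparent (choice of which eigenvalue has multiplicity $2n-1$, times a parity); the paper's buys a single normalised case at the cost of the mod-$16$ bookkeeping. No gaps; the only things left to your reader are the routine algebra you flag and the (correct) observation that the degenerate multiplicity pattern cannot come from a disconnected or complete multipartite graph.
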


\begin{theorem}\label{thm-typeIII}
Let $G$ be a strongly regular graph on $3n$ vertices. If the multiplicities of the eigenvalues of $G$ are $1, 2n, n - 1$ then either $G$ or its complement is a disjoint union of $n$ copies of $K_3$ or $G$ or its complement have the following parameters for some non-negative integer $a$:
\begin{itemize}
    \item $3n = 9a^2 + 9a + 3$, $k_1 = 3a^2 + 5a + 2, \lambda = a^2 + 3a + 1, \mu = (a + 1)^2$;
    \item $3n = 9a^2 + 9a + 3$, $k_1 = 3a^2 + a, \lambda = a^2 - a - 1, \mu = a^2$.
\end{itemize}
\end{theorem}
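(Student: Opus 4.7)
Our approach parallels the strategy for Theorems~\ref{thm-typeI} and~\ref{thm-typeII}: translate the spectral condition into a Diophantine constraint and parametrise its integer solutions. Write the spectrum of $G$ as $\{k^{(1)},r^{(m_r)},s^{(m_s)}\}$ with $r>s$ and $\{m_r,m_s\}=\{2n,n-1\}$; since $2n\neq n-1$, the conference half-case is excluded, so $r,s$ are rational and hence (being algebraic integers) integers, and standard SRG theory also gives $r\geq 0$. Two identities drive the argument: the trace relation $k+m_r r+m_s s=0$, which expresses $k$ as a linear form in $n$, and the handshaking identity $k(k-\lambda-1)=(v-k-1)\mu$, which on substituting $\mu=k+rs$ and $\lambda=k+r+s+rs$ simplifies to
\[ -k(r+1)(s+1)=(v-k-1)(k+rs).\]

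We split on which eigenvalue carries multiplicity $2n$. In \emph{Case~A} ($m_r=2n$), setting $P:=2r+s$ gives $k=-Pn+s$ and $v-k-1=(P+3)n-(s+1)$. Substituting into the handshaking identity, the constant terms cancel and the result factorises as
\[ n\bigl[P(P+3)\,n+P(r-s)-3s(r+1)\bigr]=0.\]
Eliminating $s=P-2r$ inside the bracket collapses it to $P(P+3)(n-1)+6r(r+1)$, so when $P(P+3)\neq 0$ we obtain the closed form
\[ n \;=\; 1-\frac{6r(r+1)}{P(P+3)}.\]

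Since $r\geq 0$, for $n\geq 2$ this fraction must be a negative integer, which forces $P(P+3)<0$ and hence $P\in\{-1,-2\}$; in either case $P(P+3)=-2$, giving $n=3r^2+3r+1$. Writing $r=a$, the choice $P=-1$ forces $s=-(2a+1)$ and a direct computation of $k,\lambda,\mu$ recovers parameter family~(b); $P=-2$ forces $s=-(2a+2)$ and reproduces the complement of family~(a). The degenerate factor $P(P+3)=0$ is examined by returning to the pre-factored equation: $P=0$ admits only $r=0$ (trivial), while $P=-3$ admits $r=0,s=-3$ for every $n$, which is precisely the complete multipartite graph $K_{3,3,\ldots,3}$, the complement of $nK_3$. \emph{Case~B} ($m_r=n-1$) is handled by the symmetric computation with $Q:=r+2s$: the non-degenerate values $Q\in\{-1,-2\}$ deliver family~(a) and the complement of~(b) respectively, while the degenerate case $Q=0$ with $r=2$, $s=-1$ gives the disjoint union $nK_3$ itself.

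The principal technical obstacle is the careful cancellation bringing the handshaking identity to the clean factored form $n[P(P+3)n+\cdots]=0$, together with the rather remarkable collapse of the bracket to $P(P+3)(n-1)+6r(r+1)$ after the substitution $s=P-2r$; once these are in hand, the parameter identifications are direct substitutions, and the finite check of the degenerate values $P,Q\in\{0,-3\}$ recovers exactly the disconnected family $nK_3$ and its complement.
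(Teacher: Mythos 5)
Your argument is correct, and it reaches the same Diophantine core ($n=3a^2+3a+1$ with the two parameter families, plus $nK_3$ and its complement as degenerate solutions), but it gets there by a recognisably different route. The paper fixes the integer $\epsilon=(k-s)/n$ by running the linear trace relation simultaneously for $G$ and its complement to get $\epsilon_1+\epsilon_2=3$, invoking the Perron--Frobenius lemma (Lemma~\ref{lemma-perron-frob}) both to rule out $k-s\le 0$ and to split off the disconnected case, and then extracts $3s_1^2=4n-1$ from $\Tr(A_1^2)=3nk_1$. You instead use only one trace relation together with the universal path-counting identity $k(k-\lambda-1)=(v-k-1)\mu$, and the sign argument $r\ge 0$ forces $P(P+3)<0$, hence $P\in\{-1,-2\}$, directly --- no complement, no Perron--Frobenius. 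Your factorisation $P(P+3)(n-1)+6r(r+1)=0$ is equivalent to the paper's $3s^2=4n-1$ once $P\in\{-1,-2\}$ is known (indeed $P=-\epsilon$ in the paper's notation), and your degenerate factors $P,Q\in\{0,-3\}$ recover $nK_3$ and $K_{n\times 3}$ where the paper appeals to the lemma on connected components. The price of your route is the explicit two-case split on which eigenvalue carries multiplicity $2n$ (the paper's complementation argument absorbs this), and you should make explicit that $r=0$ with $P(P+3)\neq 0$ yields only the trivial $n=1$, which is what licenses ``the fraction is a negative integer''; the benefit is a more self-contained proof that does not lean on Lemma~\ref{lemma-perron-frob} at all. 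Both proofs rely on the same integrality of $r,s$ (non-conference since $2n\neq n-1$), which you state correctly.
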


\begin{theorem}\label{thm-typeIV}
Let $\mathcal{A} = \{I_{3n}, A_1, A_2, A_3\}$ be a coherent configuration of $3n \times 3n$ matrices. If the multiplicities of the eigenvalues of $A_1, A_2, A_3$ are $1, n, n, n-1$ then one of the following hold:
\begin{itemize}
\item $A_2 = A_3^T$ and the row sums of $A_1, A_2,$, and $A_3$ are $n - 2a - 1, n + a$, and $n + a$ respectively for some even integer $a$;
\item $A_2 = A_3^T$ and the row sums of $A_1, A_2$, and $A_3$ are $n + 2a + 1, n - a - 1$, and $n - a - 1$ respectively for some odd integer $a$;
 \item All matrices are symmetric and the row sums of $A_1, A_2, A_3$ are $n + 2a + 1, n - a - 1$, and $n - a - 1$  respectively for some non-negative integer $a$.
 \end{itemize}
\end{theorem}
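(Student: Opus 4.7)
The plan is to first establish commutativity of $\mathcal{A}$, then classify the symmetry type of the three non-identity matrices, and finally reduce to Theorem~\ref{thm-typeIII} where possible. Since $\mathcal{A}$ is semisimple of dimension $4$ and the hypothesis gives four distinct strata (hence four inequivalent simple $\mathcal{A}$-modules), the Wedderburn decomposition $\mathcal{A}\cong\prod_{i=1}^{4}M_{d_i}(\mathbb{C})$ satisfies $\sum d_i^{2}=4$, forcing each $d_i=1$. So $\mathcal{A}$ is commutative, with primitive idempotents $E_0=J/(3n),E_1,E_2,E_3$ of ranks $1,n,n,n-1$. Let $p_i^{(j)}$ denote the eigenvalue of $A_i$ on $E_j$, so that $p_i^{(0)}=k_i$ is the row sum of $A_i$.

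Transposition is an involution on the three-element set $\{A_1,A_2,A_3\}$, giving two cases after relabelling. In Case~A, $A_1^{T}=A_1$ and $A_2^{T}=A_3$; the standard identity that the column sum of $A_i$ equals $k_{i^*}$ (where $A_{i^*}=A_i^{T}$) gives $k_2=k_3$, and since $A_2$ has non-real eigenvalues these must lie on the strata $E_1$ and $E_2$, which are therefore complex conjugate. Then the real symmetric matrix $A_1$ satisfies $p_1^{(1)}=p_1^{(2)}$, so it has exactly three distinct eigenvalues with multiplicities $1,2n,n-1$; Theorem~\ref{thm-typeIII} then forces $k_1\in\{n+2a+1,\,n-2a-1\}$ for some non-negative integer $a$, and $k_2=k_3=(3n-1-k_1)/2$ gives the stated values. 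In Case~B all three $A_i$ are symmetric, so the transpose relations $E_j^{T}=E_j$ are compatible with $E_1$ and $E_2$ being real and giving distinct characters. A direct analysis of the $4\times 4$ character table via the trace conditions, the identity $A_1+A_2+A_3=J-I$ (equivalently $\sum_i p_i^{(j)}=-1$ on each non-trivial stratum), and the orthogonality relations shows that the only parameter family consistent with $k_2=k_3$ is $k_1=n+2a+1$, $k_2=k_3=n-a-1$ for some non-negative integer $a$.

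The final step is the parity constraint on $a$ in Case~A, which follows from integrality of intersection numbers. Since $R_2\cap R_3=\emptyset$, the matrix $A_2^{2}$ has zero diagonal, so $A_2^{2}=p_{22}^{1}A_1+p_{22}^{2}A_2+p_{22}^{3}A_3$ with no identity term. Let $\tau$ be the eigenvalue of $A_2$ on $E_1$, so $\bar\tau$ is the eigenvalue on $E_2$; the trace condition and the known eigenvalues of $A_1$ determine the real eigenvalue $\sigma$ of $A_2$ on $E_3$, and then matching eigenvalues of the expansion of $A_2^{2}$ on $E_0$, $E_1$, $E_3$ yields in the family $k_1=n+2a+1$ the values
\[p_{22}^{1}=a(a+1),\qquad p_{22}^{2}=\tfrac{(2a-1)(a+1)}{2},\qquad p_{22}^{3}=\tfrac{(2a+1)(a-1)}{2}.\]
Since $2a\pm 1$ is odd, integrality of $p_{22}^{2}$ and $p_{22}^{3}$ forces $a$ to be odd; the analogous calculation in the complementary family $k_1=n-2a-1$ forces $a$ to be even. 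The principal technical hurdle is Case~B: with no a priori symmetry forcing $k_2=k_3$ or any $A_i$ to be strongly regular, one must work directly with the six quadratic orthogonality conditions on the nine unknowns $p_i^{(j)}$ (together with $k_1,k_2$) and verify that no parameter family outside the stated one survives.
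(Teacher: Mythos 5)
Your Case~A follows the paper's route: commutativity of the algebra, the observation that $A_2^{T}=A_3$ forces the eigenvalues of $A_2$ and $A_3$ on the two $n$-dimensional strata to be interchanged (the paper obtains this by noting that otherwise $A_2=A_3$), hence $A_1$ is strongly regular with multiplicities $1,2n,n-1$ and Theorem~\ref{thm-typeIII} supplies $n=3a^2+3a+1$ and the two valency patterns; the parity of $a$ then comes from integrality of intersection numbers, exactly as in the paper (which computes $a_{22}^{3}$ where you compute $p_{22}^{2},p_{22}^{3}$ --- your values are consistent with the stratum equations and do force the stated parities). Two smaller points there: you must also dispose of the degenerate alternative in Theorem~\ref{thm-typeIII} (that $A_1$ or its complement is a union of triangles), which the paper excludes before the case split by deriving $k_i=n+t_i$ from the linear trace equation, Lemma~\ref{lemma-perron-frob} and $\epsilon_1+\epsilon_2+\epsilon_3=3$; and your assertion that $A_2$ has non-real eigenvalues needs a one-line justification such as $\Tr(A_2^{2})=|R_2\cap R_2^{T}|=0$.

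The genuine gap is Case~B (all matrices symmetric), and you have in effect flagged it yourself. Saying that ``a direct analysis of the character table shows that the only parameter family consistent with $k_2=k_3$ is \dots'' is not a proof: first, $k_2=k_3$ is not available as a hypothesis --- in the symmetric case nothing distinguishes or pairs $A_2$ with $A_3$, and showing that two of the three valencies coincide is part of what must be established; second, eliminating all other parameter families is where the paper spends almost all of its effort on this theorem. Concretely, the paper derives from the quadratic and cubic trace equations the divisibility conditions $3(t_i+1)^2(2t_i+1)\equiv 0 \pmod{3t_i+2n+1}$ and $(t_i+1)(2t_i+1)(3t_i+1)\equiv 0 \pmod{2n+t_i}$, applies them to a non-negative $t_i=b$, and runs a discriminant/positivity argument to show that a certain integer $w$ vanishes, which yields $4n-1=3b^2$, hence $b=2a+1$ and $n=3a^2+3a+1$; only then do the relations $t_1+t_2+t_3=-1$ and $\sum_i\sqrt{4n-1-3t_i^2}=0$ force $t_2=t_3=-a-1$ and so $k_2=k_3=n-a-1$. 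None of this follows routinely from the orthogonality relations on the character table, so as written your Case~B is an assertion of the conclusion rather than an argument for it.
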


\begin{theorem}\label{thm-typeV}
There exists no coherent configuration $\mathcal{A} = \{I_{3n}, A_1, A_2, A_3, A_4, A_5\}$ of $3n\times 3n$ matrices such that the multiplicities of the eigenvalues of $A_1, \ldots, A_5$ are $1, n, n, n-1$.
\end{theorem}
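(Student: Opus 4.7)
My plan is to derive a contradiction from the Wedderburn decomposition of the coherent algebra $\mathcal{A}$ spanned by $I_{3n}, A_1, \ldots, A_5$. Since these six $\{0,1\}$-matrices have pairwise disjoint supports partitioning $\Omega\times\Omega$, they are linearly independent, so $\dim_{\mathbb{C}}\mathcal{A}=6$. Axiom~(c) of coherent configurations together with the fact that for $\{0,1\}$-matrices transpose equals conjugate transpose show that $\mathcal{A}$ is a $*$-subalgebra of $M_{3n}(\mathbb{C})$, hence semisimple.

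By Wedderburn, $\mathcal{A}\cong\bigoplus_{i=1}^{s} M_{d_i}(\mathbb{C})$ with $\sum_{i=1}^{s} d_i^2=6$. The natural module $\mathbb{C}^{3n}$ is faithful, so every simple $\mathcal{A}$-module occurs in it, and the number of isotypic components (strata, in the paper's extended sense) equals $s$. The hypothesis lists exactly four stratum dimensions $1,n,n,n-1$, so $s=4$. Homogeneity gives the all-ones vector as a one-dimensional trivial submodule, which must account for the stratum of dimension $1$; it contributes $d_i=1$ to the Wedderburn sum. The remaining three strata then correspond to positive integers $d_2,d_3,d_4$ satisfying
\[ d_2^2+d_3^2+d_4^2=5. \]

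A direct enumeration shows no such triple exists: $(1,1,1)$ gives $3$, $(1,1,2)$ gives $6$, and any larger entry overshoots. This contradiction establishes the theorem.

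The main point is purely structural: rank $6$ together with only $4$ strata forces a non-commutative Wedderburn decomposition whose factor dimensions fail an elementary sum-of-squares constraint. No combinatorial input specific to the multiplicities $n,n,n-1$ is needed; the argument rules out \emph{any} homogeneous coherent configuration of rank $6$ with exactly $4$ strata. I anticipate no serious obstacle beyond verifying that $\mathcal{A}$ is semisimple (standard) and that the number of isotypic components equals $s$ when the natural representation is faithful (also standard); the arithmetic step is immediate.
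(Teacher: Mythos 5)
Your setup is sound up to the point where you count isotypic components: the adjacency algebra is indeed a six-dimensional semisimple $*$-algebra, the natural module is faithful, and the number of Wedderburn blocks equals the number of strata. The gap is the step ``the hypothesis lists exactly four stratum dimensions $1,n,n,n-1$, so $s=4$.'' The list $1,n,n,n-1$ gives the eigenvalue multiplicities of each individual matrix $A_i$, not the stratum dimensions of the configuration, and the two eigenvalues of multiplicity $n$ need not lie in distinct isotypic components. In the situation this theorem is actually aimed at (the non-multiplicity-free case in Neumann's paper), the algebra is $\mathbb{C}\oplus M_2(\mathbb{C})\oplus\mathbb{C}$ and there are only \emph{three} strata, of dimensions $1$, $2n$ and $n-1$: each $A_i$ acts on the $2n$-dimensional isotypic component through a $2\times 2$ matrix $\Theta_{i,1}$ whose two eigenvalues $r_i,s_i$ each contribute multiplicity $n$ to the spectrum of $A_i$. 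This configuration passes your sum-of-squares test, since $1^2+2^2+1^2=6$, so no contradiction is obtained. Your arithmetic does correctly show that a rank-$6$ homogeneous configuration cannot have four strata, but that is a different (and here vacuous) statement from the one to be proved.

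Some input specific to the multiplicities is therefore unavoidable, and this is what the paper supplies. Writing $k_i,r_i,s_i,t_i$ for the eigenvalues of $A_i$ with multiplicities $1,n,n,n-1$, the vanishing of $\Tr(A_i)$ gives $k_i\equiv t_i\pmod n$, and Lemma~\ref{lemma-perron-frob} gives $|t_i|<k_i$, so $k_i=\epsilon_i n+t_i$ with $\epsilon_i\ge 1$ for each of the five non-identity matrices, whence $\sum_{i=1}^5\epsilon_i\ge 5$. On the other hand $\sum_{i=1}^5 k_i=3n-1$ and $\sum_{i=1}^5 t_i=-1$ force $\sum_{i=1}^5\epsilon_i=3$, a contradiction. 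To repair your argument you would have to rule out the three-stratum, $M_2(\mathbb{C})$-block possibility, and the Wedderburn dimension count alone cannot do that.
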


\begin{theorem}\label{thm-typeVI}
There is no strongly regular graph on $3n$ vertices with eigenvalue multiplicities $1, n + 1, 2(n - 1)$. 
\end{theorem}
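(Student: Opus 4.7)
The plan is to assume for contradiction that such a strongly regular graph $G$ exists and reduce the feasibility conditions to a single positive-definite quadratic form equation whose integer solutions can be ruled out. The case $n=3$ (where both non-trivial multiplicities coincide as $4$) is realised by the Paley graph $P_9$ and falls under Theorem~\ref{thm-typeI}, so we may assume $n\neq 3$. Replacing $G$ by its complement if necessary, we arrange that the multiplicities $1,\,n+1,\,2(n-1)$ correspond to the eigenvalues $k>r>s$ in that order. Since the two non-trivial multiplicities are now distinct, $r$ and $s$ must be rational integers; write $r=a\geq 0$ and $s=-b\leq -1$.

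From $k+(n+1)r+2(n-1)s=0$, $rs=\mu-k$ and $r+s=\lambda-\mu$ one reads off
\[k=2(n-1)b-(n+1)a,\qquad \mu=k-ab,\qquad \lambda=a-b+\mu,\]
and direct computation gives the pleasant factorisation $k-1-\lambda=(a+1)(b-1)$. Substituting into the basic SRG identity $\mu(v-k-1)=k(k-\lambda-1)$ and introducing the affine combination $c=(n+2)a-2(n-1)b$, a chain of cancellations collapses the whole system to
\[(n+1)\,c(c+3n)\;=\;-6n(n-1)\,b(b-1),\]
or equivalently, with $D=2c+3n$ and $B=2b-1$,
\[(n+1)D^2+6n(n-1)B^2 \;=\; 3n(3n-1)(n+2).\]

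It then remains to show this Diophantine identity admits no integer solution producing a primitive SRG. For $B=1$ the equation forces $D^2=9n^2$, giving either $a=2(n-1)/(n+2)$ with $k=1$ (a disjoint matching, imprimitive since $\mu=0$) or $a=-1$ (infeasible). For $B\geq 3$, reducing modulo $n+1$ yields the divisibility $(n+1)\mid 12(B^2-1)$, which combined with the bound $B^2\leq (3n-1)(n+2)/[2(n-1)]$ coming from $D^2\geq 0$ restricts, for each fixed $B$, the admissible $n$ to a short explicit list of divisors; on each such pair one computes $D^2=[3n(3n-1)(n+2)-6n(n-1)B^2]/(n+1)$ and verifies that it fails to be a perfect square, the only escapes being the already-excluded cases $K_{2,2,2}$ (imprimitive, $n=2$, $B=3$) and the Paley graph $P_9$ (conference, $n=3$, $B=3$).

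The main obstacle is making the infinite family $B\geq 3$ manageable uniformly. My first attempt would be to strengthen the modular analysis by also reducing the master equation modulo $n$, which forces $n\mid D^2$ and, combined with $(n+1)\mid 12(B^2-1)$, pins $D$ down modulo $n(n+1)$; the hope is that the sharp quadratic equality then cannot hold unless $B\in\{1,3\}$, which reduces the problem to the small cases already disposed of. If such a slick argument proves elusive, the backup is the finite divisor check for each $B$ outlined above, which is bookkeeping but perfectly adequate.
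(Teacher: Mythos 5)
Your reduction to the quadratic-form identity $(n+1)D^2+6n(n-1)B^2=3n(3n-1)(n+2)$ is correct (it checks out, for instance, against the Paley graph $P_9$ with $n=3$, $D=3$, $B=3$), and the case $B=1$ is properly disposed of. But the proof does not close: the entire content of the theorem lives in the case $B\ge 3$, and there you offer only a hope and a non-terminating backup. The divisibility $(n+1)\mid 12(B^2-1)$ together with the positivity bound $B^2\le (3n-1)(n+2)/(2(n-1))$ restricts $n$ to finitely many values \emph{for each fixed} $B$, but it gives no upper bound on $B$ itself: the positivity condition is a \emph{lower} bound on $n$ (roughly $n\gtrsim 2B^2/3$), and the divisor condition only caps $n$ by $12(B^2-1)$, so for every odd $B$ there remain candidate values of $n$ (e.g.\ $n=12B^2-13$) on which $D^2$ must be tested. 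Hence the ``finite divisor check for each $B$'' is an infinite computation, not a proof. The proposed rescue via reduction modulo $n$ is also shaky as stated: $n\mid D^2$ does not determine $D$ modulo $n$ unless $n$ is squarefree, and squarefreeness is precisely the hypothesis this paper discards by removing the primality of $p$.

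For comparison, the paper's argument is uniform and short: reducing the linear and quadratic trace identities modulo $n$ gives $k\equiv 2s-r$ and $k^2\equiv 2s^2-r^2\pmod n$, whence $r\equiv s\equiv k\pmod n$, and after normalising so that the multiplicity-$(n+1)$ eigenvalue is not divisible by $n$ one gets $r\equiv -1\pmod n$; then $k\in\{n-1,\,2n-1\}$, and in either case (passing to the complement in the second) the Perron--Frobenius bound $|r|,|s|<k$ forces $r=s$, which is impossible for a strongly regular graph. To salvage your route you would need an absolute bound on $B$ (equivalently on $|s|$) independent of $n$ --- in effect some analogue of the congruence $r\equiv s\pmod n$ --- before your case analysis becomes finite.
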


\begin{theorem}\label{thm-typeVII}
Let $\mathcal{A} = \{I_{3n}, A_1, A_2, A_3\}$ be a coherent configuration of $3n\times 3n$ matrices. If the eigenvalues of $A_1, \ldots, A_3$ have multiplicities $1, n + 1, n - 1, n - 1$, then $\mathcal{A}$ is an association scheme and one of the following hold:
\begin{itemize}
    \item $n = 7$ and the row sums of $A_1, A_2, A_3$ are $4, 8$, and $8$;
    \item $n = 19$ and the row sums of $A_1, A_2, A_3$ are $6, 20$, and $30$;
    \item $n = 31$ and the row sums of $A_1, A_2, A_3$ are $32, 40$, and $20$.
\end{itemize}
\end{theorem}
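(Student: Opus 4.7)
The plan is three-fold. First, I would establish commutativity of the algebra $\mathcal{A}$. The four listed strata have dimensions summing to $3n$, so the Wedderburn decomposition of $\mathcal{A}$ contains at least four simple components; since $\dim\mathcal{A} = 4$, each summand is one-dimensional, and $\mathcal{A}$ is commutative. Consequently $I_{3n}, A_1, A_2, A_3$ are simultaneously diagonalisable, each $A_i$ acting as a scalar $\lambda_{i,\ell}$ on the stratum $W_\ell$ of dimension $d_\ell$.

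Second, I would show that $\mathcal{A}$ is an association scheme by ruling out the only possible non-symmetric configuration: up to relabelling, $A_1 = A_1^\top$ and $A_2^\top = A_3$. In that case $A_3$ acts on each stratum as the complex conjugate of the scalar by which $A_2$ acts, so $k_2 = k_3$ and $\lambda_{1,\ell} + 2\,\mathrm{Re}(\mu_\ell) = -1$ on each non-trivial stratum. Combining this with the trace identities $\sum_\ell d_\ell \lambda_{i,\ell} = 0$ and $\sum_\ell d_\ell |\lambda_{i,\ell}|^2 = 3n k_i$, together with non-negativity and integrality of the intersection numbers $p_{ij}^k$, I would derive a contradiction, forcing every $A_i$ to be symmetric.

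Third, in the symmetric case the first eigenmatrix $P$ is a $4\times 4$ real matrix whose first row is $(1, k_1, k_2, k_3)$ and which satisfies the standard orthogonality $P^\top \mathrm{diag}(d_\ell)\, P = 3n\,\mathrm{diag}(k_i)$ together with $\sum_\ell d_\ell P_{\ell i} = 0$. Because the two strata of dimension $n-1$ share multiplicity, the entries $P_{\ell i}$ for $\ell = 2, 3$ form, for each $i$, either a pair of integers or a pair of algebraic conjugates in a common quadratic extension $\mathbb{Q}(\sqrt{d})$. The main obstacle is the resulting Diophantine reduction: in the conjugate subcase, writing the paired entries as $(u_i \pm v_i\sqrt{d})/2$ with $u_i, v_i \in \mathbb{Z}$ and substituting into the orthogonality relations, the integrality of the uniquely-placed eigenvalue $P_{1,i}$ on the stratum of dimension $n+1$, and integrality and non-negativity of the $p_{ij}^k$, should pin $n$ down to the three values $7, 19, 31$, each producing the listed unique set of row sums; the rational subcase should yield no further solutions by a direct integrality argument.
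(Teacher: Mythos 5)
Your outline matches the paper's architecture at the top level (commutativity, then symmetry, then a Diophantine reduction via the trace/orthogonality relations), but two of your three steps hide the actual content. For the symmetry step, "combine the trace identities with integrality of the $p_{ij}^k$ and derive a contradiction" is not an argument: the mechanism the paper uses is quite specific. If $A_2^\top=A_3$ then $A_2$ and $A_3$ have the same eigenvalues on the two $(n-1)$-dimensional strata, and the column relations $s_1+s_2+s_3=-1$, $t_1+t_2+t_3=-1$ then force the symmetric matrix $A_1$ to take \emph{equal} values on those two strata. That makes $A_1$ the adjacency matrix of a strongly regular graph with multiplicities $1$, $n+1$, $2(n-1)$, which is exactly what Theorem~\ref{thm-typeVI} rules out. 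You need this reduction (or an equivalent re-proof of Theorem~\ref{thm-typeVI} inline); nothing in your sketch produces the contradiction otherwise.

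The larger gap is in your third step. The orthogonality relations, the integrality of $P_{1,i}$ and of the $p_{ij}^k$, and the quadratic-conjugate structure of the paired eigenvalues do \emph{not} pin $n$ down to $7$, $19$, $31$. What the paper actually does is extract a divisibility condition ($n-1$ divides $3r_i(r_i+1)$, coming from the integrality of $s_it_i$), a discriminant inequality (Lemma~\ref{lemma-largeineq}), and a case split on the integers $\epsilon_i$ with $\sum\epsilon_i=3$ --- and even after all of that, spurious parameter sets survive: $n=3$, $4$, $10$, and alternative row-sum patterns at $n=7$ and $n=19$. These are eliminated only by checking the eigenvalue sum conditions against the explicit quadratic irrationalities, and, for several cases, by consulting the Hanaki--Miyamoto classification of association schemes on few vertices to confirm that no scheme with the surviving row sums exists. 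Your plan as written would terminate with a list strictly longer than three values and no means of shortening it; you should either supply the additional divisibility and inequality machinery plus the computational eliminations, or acknowledge that the final step is not a "direct integrality argument."
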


\begin{theorem}\label{thm-typeVIII}
There exists no coherent configuration $\mathcal{A} = \{I_{3n}, A_1, A_2, A_3, A_4, A_5\}$ of $3n\times 3n$ matrices, where $A_1, \ldots, A_5$ have eigenvalues with multiplicities $1, n + 1, n - 1, n- 1$.
\end{theorem}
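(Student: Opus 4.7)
The plan is to derive an immediate contradiction by counting dimensions in the Wedderburn decomposition of $\mathcal{A}$, exactly as in the proof of Theorem~\ref{thm-typeV}. The six basis matrices $I_{3n}, A_1, \ldots, A_5$ are linearly independent, so $\dim_{\mathbb{C}}\mathcal{A} = 6$; on the other hand the prescribed multiplicities will force the algebra to have too few simple blocks to realise this dimension.

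In detail: since $\mathcal{A}$ is a finite-dimensional $*$-subalgebra of $M_{3n}(\mathbb{C})$ it is semisimple, and Artin--Wedderburn writes $\mathcal{A} \cong \prod_{j=1}^{r} M_{f_j}(\mathbb{C})$ for positive integers $f_j$ with $\sum_{j=1}^{r} f_j^{2} = 6$. Because $\mathcal{A}$ acts faithfully on $\mathbb{C}^{3n}$, every simple $\mathcal{A}$-module appears in the action, so $r$ equals the number of isotypic components, i.e.\ the number of strata of the configuration in the generalised sense introduced in Section~2. The hypothesis that the multiplicities are $1, n+1, n-1, n-1$ is a list of four positive integers summing to $3n$; it identifies these multiplicities with the dimensions of the strata, and so forces $r = 4$.

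The final step is the arithmetic observation that no four positive integers $f_1, f_2, f_3, f_4$ satisfy $\sum_{j=1}^{4} f_j^{2} = 6$. Indeed the minimum of $\sum f_j^{2}$ over four positive integers is $4$, attained only when every $f_j = 1$; and as soon as any $f_j \geq 2$ its square contributes at least $4$, forcing $\sum f_j^{2} \geq 4 + 1 + 1 + 1 = 7$. So $6$ is simply skipped, giving the required contradiction.

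There is no real obstacle: the argument is identical in structure to that of Theorem~\ref{thm-typeV}, with the role of $1, n, n, n-1$ played by $1, n+1, n-1, n-1$. In both cases the combinatorial content of the relations $A_i$ plays no part; the only input is the dimension of the span of $\{I_{3n}, A_1, \ldots, A_5\}$ together with the prescribed number of strata, and the parity/arithmetic of the sum-of-squares identity does the rest.
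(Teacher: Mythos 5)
Your argument breaks down at the step ``forces $r=4$''. The four listed multiplicities $1,\ n+1,\ n-1,\ n-1$ do not correspond to four distinct isotypic components. In the non-commutative setting the natural module decomposes as $\bigoplus_j T_j^{\oplus m_j}$, where $T_j$ is the simple module of the block $M_{f_j}(\mathbb{C})$, and a basis matrix $A_i$ restricted to the $j$-th isotypic component acts as $\Theta_j(A_i)\otimes I_{m_j}$, contributing $f_j$ eigenvalues \emph{each} of multiplicity $m_j$. Here the only block structure consistent with rank $6$ and these multiplicities is $r=3$ with $(f_1,f_2,f_3)=(1,1,2)$ and $(m_1,m_2,m_3)=(1,\ n+1,\ n-1)$: the two eigenvalues of multiplicity $n-1$ both arise from a single $2$-dimensional irreducible representation occurring with multiplicity $n-1$. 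Then $\sum_j f_j^2=1+1+4=6$, which matches the rank exactly, so the Wedderburn dimension count yields no contradiction whatsoever. A useful sanity check: applied verbatim to the multiplicities $1,n,n,n-1$ of Theorem~\ref{thm-typeV}, your count would ``prove'' that theorem in one line too; yet the paper's proof of Theorem~\ref{thm-typeV} --- which you cite as your model --- is not a dimension count but a trace-equation argument ($k_i=\epsilon_i n+t_i$ with $\epsilon_i\ge1$ for five matrices, against $\sum\epsilon_i=3$), precisely because there as well the block structure is $\mathbb{C}\times M_2(\mathbb{C})\times\mathbb{C}$ of dimension $6$.

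The actual elimination of the rank-$6$ case with multiplicities $1,n+1,n-1,n-1$ requires genuine work: one passes to the symmetrised scheme, writes $k_i=\epsilon_i(n-1)-2r_i$ with $\sum_i\epsilon_i=3$ and $\epsilon_i\ge0$ (Lemma~\ref{lemma-epsilon}), and the configurations with five non-identity classes are then ruled out case by case inside Proposition~\ref{prop-719}, using the divisibility condition of Lemma~\ref{lemma-divcond}, the inequality of Lemma~\ref{lemma-largeineq}, and checks against the classification of small association schemes. You would need to supply an argument of that kind; no purely structural dimension count can do the job.
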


\section{The proofs}

\subsection{A lemma}

We start with a lemma that will be used throughout the paper.

%

\begin{lemma}
Let $\mathcal{A}$ be a homogeneous coherent configuration on $n$ points.
Suppose that the dimension of a non-trivial stratum for $\mathcal{A}$ is
at least $n/3-1$. Then one of the following happens:
\begin{enumerate}
\item One of the relations in $\mathcal{A}$ has at least $n/3$ connected
components.
\item Any matrix in $\mathcal{A}$ has the property that any eigenvalue
$\lambda$ apart from the row sum $r$ satisfies $|\lambda|<r$.
\end{enumerate}
\label{lemma-perron-frob}
\end{lemma}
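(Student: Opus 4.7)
The plan is to argue the contrapositive: assume that (b) fails, so some $A_i \in \mathcal{A}$ has an eigenvalue $\lambda \ne r_i$ of modulus equal to the row sum $r_i$, and deduce (a).

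Each $A_i$ is nonnegative with constant row sum $r_i$, so by the Perron--Frobenius theorem its spectral radius is $r_i$ and the all-ones vector (spanning the trivial stratum) is an $r_i$-eigenvector. By homogeneity of $\mathcal{A}$, all strongly connected components of the digraph $R_i$ have the same size $n/c_i$; on each component $A_i$ restricts to an irreducible nonnegative matrix, and the irreducible form of Perron--Frobenius forces the eigenvalues on the spectral circle $|z| = r_i$ to be $r_i$ times an $h$-th root of unity, where $h \ge 1$ is the period of the block, and $h \ge 2$ because (b) fails. Each of the $h$ resulting unimodular eigenvalues has multiplicity exactly $c_i$ for $A_i$, and the period structure further partitions each strongly connected component into $h$ period classes, giving $c_i h$ classes on $\Omega$ altogether. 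The characteristic matrix of this ``same period class'' equivalence can be recovered from the support of $\sum_{k \ge 0} A_i^{kh}$, so it is a $0/1$ matrix in the coherent algebra and therefore a union of basic relations of $\mathcal{A}$; in particular some relation in $\mathcal{A}$ has at least $c_i h$ connected components.

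Now invoke the large non-trivial stratum $V$ with $\dim V \ge n/3 - 1$. In the commutative case $V$ is contained in a single eigenspace of $A_i$. If $V$ lies in the $r_i$-eigenspace then, together with the trivial stratum, $c_i \ge 1 + \dim V \ge n/3$, so $R_i$ itself satisfies (a). If $V$ lies in one of the other unimodular eigenspaces then $c_i \ge \dim V \ge n/3 - 1$, and combined with $h \ge 2$ the period-class partition yields a relation with $c_i h \ge 2(n/3 - 1) \ge n/3$ components (for $n \ge 6$; small $n$ are checked directly). If instead $V$ sits in an eigenspace of $A_i$ off the spectral circle then $V$ imposes no direct bound on $c_i$, but the diagonal relation alone already has $n \ge n/3$ components. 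The non-commutative case runs identically after replacing ``eigenspace'' by ``isotypic component''.

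The main obstacle is the algebraic step identifying the period-class partition as an equivalence relation inside the coherent algebra generated by $\mathcal{A}$, so that it is realised by a union of basic relations of $\mathcal{A}$; this is standard for Bose--Mesner algebras in the commutative case but requires the analogous fact for general coherent algebras in the non-commutative setting. Once that algebraic identification is in hand, the rest of the argument is just dimension counting across the Perron--Frobenius decomposition of the spectrum of $A_i$.
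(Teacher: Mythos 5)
Your strategy rests on the same main tool as the paper (the Perron--Frobenius trichotomy for non-negative matrices with constant row sums), but you take a longer route through the imprimitive case, and one step of your dimension count does not hold up. The problem is your third case, where the large stratum $V$ sits in an eigenspace of $A_i$ off the spectral circle: you discharge it by observing that the diagonal relation already has $n\ge n/3$ components. If that move were admissible it would prove conclusion (a) unconditionally and make the whole lemma vacuous; the relations intended in (a) are plainly the non-diagonal ones (this is how the lemma is used later, e.g.\ ``either one of $A_1,A_2$ is the disjoint union of $n$ copies of $K_3$ or $k_i>|s_i|$''). Under that intended reading your third case is a genuine gap: knowing only that \emph{some} non-trivial stratum is large gives no lower bound on the multiplicity $c_i$ of the peripheral eigenvalue $r_i\omega$ when $V$ lies elsewhere. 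The repair --- which is what the paper's proof implicitly relies on --- is that in every application \emph{all} non-trivial strata have dimension at least $n/3-1$; then the eigenspace of $r_i\omega$, being a union of non-trivial strata, itself has dimension at least $n/3-1$, so $c_i\ge n/3-1$ and your count $c_ih\ge 2(n/3-1)\ge n/3$ goes through with no reference to where $V$ sits. With that hypothesis the argument also collapses to the paper's shorter one: if $c_i\ge 2$ the $r_i$-eigenspace (trivial stratum plus at least one non-trivial stratum) already forces $c_i\ge n/3$, and if $c_i=1$ then $r_i\omega$ is a \emph{simple} eigenvalue, contradicting the large-strata hypothesis outright --- which is exactly how the paper disposes of the imprimitive case.

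Two secondary points. Your period-class construction (realising the ``same period class'' equivalence as the support of $\sum_k A_i^{kh}$, hence a union of basic relations, hence some basic relation with at least $c_ih$ components) is correct and is a genuinely different, more constructive treatment of imprimitivity than the paper's; but it silently assumes all components of $R_i$ share the same period $h$ (true, because the diagonal of $A_i^{\ell}$ is a constant multiple of $I$ in a homogeneous configuration, but this needs saying), and it breaks down in the degenerate case where every period class is a singleton. You also skip the reducible-but-not-block-diagonal alternative in the Perron--Frobenius classification, which the paper rules out explicitly using the constancy of the row and column sums; your passage from ``strongly connected components'' to ``$A_i$ restricts to irreducible blocks'' depends on exactly this point.
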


\begin{proof}
We use the \emph{Perron--Frobenius Theorem}, see~\cite{gantmacher}. For
any non-negative matrix $A$, one of the following holds:
\begin{itemize}
\item Under simultaneous row and column permutations, $A$ is equivalent to
a matrix of the form $\begin{pmatrix}B&O\\O&C\\\end{pmatrix}$. In our case
the constancy of the row sum $r$ means that $r$ has multiplicity equal to
the number of connected components; so there are at least $n/3$ connected
components, and (a) holds.
\item $A$ is decomposable, that is, under simultaneous row and column
permutations it is equivalent to a matrix of the form
$\begin{pmatrix}B&X\\O&C\\\end{pmatrix}$, where $X\ne O$. But this
contradicts the fact that the row sum is constant.
\item $A$ is imprimitive, that is, equivalent under simultaneous row and
column permutations to a matrix of the form
\[\begin{pmatrix}O&B_1&\ldots&\ldots&0\\ O&O&B_2&\ldots&O\\
\ldots&\ldots&\ldots&\ldots&\ldots\\ B_t&O&O&\ldots&O\end{pmatrix}.\]
But then $r\mathrm{e}^{2\pi\mathrm{i}k/t}$ is a simple eigenvalue for
$k=0,1,\ldots,t-1$, contrary to assumption.
\item $A$ is primitive. Then the Perron--Frobenius Theorem asserts that
there is a single eigenvalue with largest absolute value, as required.
\end{itemize}
\end{proof}

\subsection{Proof of Theorem~\ref{thm-typeI}}

We first prove a lemma about strongly regular graphs that will be used in the proof of Theorem~\ref{thm-typeI}.

\begin{lemma}\label{lemma-conference}
Let $G$ be a strongly regular graph with parameters $(n, k, \lambda, \mu)$ and let $k, r, s$ be the eigenvalues of the adjacency matrix of $G$. If $r$ and $s$ have equal multiplicities then $G$ is a conference graph.
\end{lemma}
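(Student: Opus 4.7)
The plan is to combine two standard identities satisfied by the eigenvalues and their multiplicities in an SRG. On the subspace orthogonal to the all-ones vector, $A$ satisfies $A^2-(\lambda-\mu)A-(k-\mu)I=0$, so the non-principal eigenvalues $r,s$ are the roots of this quadratic; in particular $r+s=\lambda-\mu$. Coupled with the trace identity $\mathrm{tr}(A)=0$, written as $k+fr+gs=0$, this gives the second relation $fr+gs=-k$.

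Suppose now that $f=g$. Then $f=g=(n-1)/2$ (so $n$ is odd), and the trace identity collapses to $(n-1)(\mu-\lambda)=2k$. Writing $d=\mu-\lambda$, positivity of $k$ forces $d\ge 1$, while the non-triviality assumption $k\le n-2$ gives $d(n-1)\le 2(n-2)$ and hence $d<2$. Thus $d=1$, so $\mu=\lambda+1$ and $k=(n-1)/2$.

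Finally, substituting $\mu=\lambda+1$ and $k=(n-1)/2$ into the standard SRG counting identity $k(k-\lambda-1)=(n-k-1)\mu$ and solving the resulting linear equation yields $\lambda=(n-5)/4$ and $\mu=(n-1)/4$; integrality then forces $n\equiv 1\pmod 4$. These are precisely the conference graph parameters $(4t+1,2t,t-1,t)$.

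I do not expect any serious obstacle in this argument. The only point that requires some care is the implicit exclusion of degenerate cases (the complete or empty graph, and the matching $mK_2$, where only two distinct eigenvalues occur); these are ruled out by the tacit assumption that $r$ and $s$ are genuine non-principal eigenvalues both distinct from $k$, so that their multiplicities are unambiguously defined.
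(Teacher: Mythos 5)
Your argument is correct, and at its core it is the same as the paper's: both reduce the hypothesis $f=g$ to the identity $2k=(n-1)(\mu-\lambda)$. The paper gets there by quoting the standard closed formula for the multiplicities $f,g$ and observing that equality forces the numerator $(n-1)(\mu-\lambda)-2k$ to vanish, at which point it simply declares $G$ a conference graph (in effect taking that identity as the definition). You instead derive the same relation from first principles --- $r+s=\lambda-\mu$ from the minimal polynomial on $\mathbf{1}^\perp$ together with $k+fr+gs=0$ --- and then go further: setting $d=\mu-\lambda$, the bounds $1\le d$ and $d(n-1)\le 2(n-2)$ force $d=1$, and substituting into $k(k-\lambda-1)=(n-k-1)\mu$ pins down the full parameter set $\bigl(n,\tfrac{n-1}{2},\tfrac{n-5}{4},\tfrac{n-1}{4}\bigr)$ and the congruence $n\equiv 1\pmod 4$. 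This extra step is worthwhile: it closes the small definitional gap in the paper's last line (the equivalence of ``$2k=(n-1)(\mu-\lambda)$'' with the usual parameter description of a conference graph is exactly what you prove), and your remark about excluding the complete, empty, and $mK_2$ cases is the right caveat, since the inequality $k\le n-2$ is used and the multiplicities of $r$ and $s$ are only well defined when the graph genuinely has three distinct eigenvalues.
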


\begin{proof}
It is known for a strongly regular graphs that the multiplicities of $r$ and $s$ are 
\[f, g = \frac{1}{2}( n - 1 \pm \frac{(n - 1)(\mu - \lambda) - 2k}{\sqrt{(\mu - \lambda)^2 - 4(k - \mu)}})
\]
respectively. Hence, if $f = g$ then it follows that 

\[
(n - 1)(\mu - \lambda) - 2k = -(n-1)(\mu - \lambda) +2k \Rightarrow 2k = (n - 1)(\mu - \lambda)
\]
and thus $G$ is a conference graph, as required. Moreover, $f = g = \frac{n - 1}{2}$.
\end{proof}

\begin{proof}[Proof of Theorem~\ref{thm-typeI}]
Since $\mathcal{A}$ is a coherent configuration, $A_0 + A_1 + A_2 = J_{n}$ and moreover $A_{i}^{T} = A_{j}$ for $i, j\in \{1, 2\}$. Hence, there are two possibilities. Either $A_i = A_i^{T}$ for $i \in \{1, 2\}$ or $A_i = A_j^T$ for $i, j\in \{1, 2\}$ and $i\neq j$. 

In the first case, the graphs with adjacency matrices $A_1$ and $A_2$ are undirected. Moreover, since $A_1$ and $A_2$ are symmetric, $\mathcal{A}$ is an association scheme and hence those graphs are strongly regular and one is the complement of the other. It follows by Lemma~\ref{lemma-conference} that $A_1$ and $A_2$ are the adjacency matrices of conference graphs and in fact two copies of the same conference graph. Moreover, for a conference graph to exist, it is known that $n\equiv 1\pmod{4}$.

In the second case, since $\mathcal{A}$ is a coherent configuration, it follows that $A_1$ and $A_2$ must have constant row and column sums and hence their digraphs are regular. Let $G_1, G_2$ be the digraphs with adjacency matrices $A_1$ and $A_2$ respectively and $V$ be the vertex set of those digraphs. For $u, v\in V$, we write $u \rightarrow_{G_1} v$ if $v$ is an out-neighbour of $u$ in $G_1$ and similarly $u \rightarrow_{G_2} v$ if $v$ is an out-neighbour of $u$ in $G_2$. Since $A_1 + A_2 = J - I$, it follows that $u \rightarrow_{G_1} v \iff u\not\rightarrow_{G_2} v$ and vice versa and also that either $(A_k)_{ij} = 1$ or $(A_k)_{ji} = 1$ for $k\in \{1, 2\}$. Hence, $G_1$ and $G_2$ are regular tournaments. Also, notice that since $\mathcal{A}$ is a coherent configuration, it follows that for $m, n\in \{1, 2\}, m\neq n$, there exists a constant $p_{mn}^m$ such that for any $i, j\in V$, such that $(A_m)_{ij} = 1$, $|\{k \mid (A_m)_{ik} = 1, (A_n)_{kj} = 1\}| = |\{k \mid (A_m)_{ij} = 1, (A_m)_{jk} = 1\}| = p_{mn}^m$. Hence, both $G_1$ and $G_2$ are doubly regular, and it is known that $n\equiv 3 \pmod{4}$ for doubly regular tournaments.
\end{proof}

\subsection{Proof of Theorem~\ref{thm-typeII}}

\begin{proof}
Let $A_1$ be the adjacency matrix of $G$ and $A_2$ be the adjacency matrix of its complement. Since $G$ is strongly regular, the eigenvalues of $A_1$ and $A_2$ have the same multiplicities. Moreover, if $A_1$ has eigenvalues $k_1, r_1, s_1$ then $A_2$ has eigenvalues $k_2 = 3n - k_1 - 1, r_2 = -1 -r_1, s_2 = -1 - s_1$. We know that for $i\in \{1, 2\}$
\[
\Tr (A_i) = k_i + nr_i + (2n - 1)s_i = 0
\]
Reducing modulo $n$ gives that $k_i \equiv s_i \pmod{n}$. Therefore, since by Lemma~\ref{lemma-perron-frob} $k_i > s_i$, it follows that $k_i - s_i = \epsilon_in$ for $\epsilon_i \in \{1, 2\}$. Therefore, 
\[n_1 + n_2 - s_1 - s_2 = (\epsilon_1 + \epsilon_2)n \Rightarrow 3n - 1 - s_1 +1 + s_1 = (\epsilon_1 + \epsilon_2)n \Rightarrow \epsilon_1 + \epsilon_2 = 3.\]
Assume without loss of generality that $\epsilon_1 = 1$ and $\epsilon_2 = 2$. Then, $k_1 = n + s_1$ and also
\[
n + s_1 + nr_1 + (2r - 1)s_1 = 0
\Rightarrow r_1 = -1 -2s_1.\]
Also, we have that 
\[
\Tr (A_1^2) = k_1^2 + nr_1^2 + (2n - 1)s_1^2 = 3nk_1.
\]
Appropriate substitution gives
\[
(n + s_1)^2 + n(1 + 2s_1)^2 + (2n-1)s_1^2 = 3n(n + s_1)
\]
which simplifies to
\[
6s_1^2 + 3s_1 + 1 - 2n = 0.
\]
Therefore, 
\[
s_1 = \frac{1}{4}\left(  -1 \pm \sqrt{\frac{16n - 5}{3}}\right)
\]

Since $G$ is strongly regular and its eigenvalues have different multiplicities, it is not a conference graph, and hence its eigenvalues are integer. Hence, $16n - 5 = 3b^2$ for some non-negative integer $b$. This gives us that $3b^2 + 5 \equiv 0 \pmod{16}$. It follows that $b = 3, 5, 11$ or $13 \pmod{16}$. We therefore need to examine the following four cases:\\

\medskip

\noindent\textbf{Case 1:} $b = 16a + 3$.

\medskip In this case we get:

\[
16n = 3(16a + 3)^2 + 5 \Rightarrow n = 48a^2 + 18a + 2.
\]
and $s_1 = -4a - 1$. Notice that only the negative solution works, since $16a + 2$ is not divisible by 4.
Consequently $k = 48a^2 + 14a + 1$.
We also get $r_1 = 8a + 1$

Now, using the formulae for the eigenvalues of strongly regular graphs, namely
\[
r_1, s_1 = \frac{1}{2}\left( (\lambda - \mu) \pm \sqrt{(\lambda - \mu)^2 + 4(k_1 - \mu)}\right)
\]
we get 
\begin{align*}
    \lambda - \mu = r_1 + s_1\\
    4\mu = (\lambda - \mu)^2 - (r - s)^2 + 4k.
\end{align*}
Solving this system we obtain $\lambda = 16a^2 + 6a$ and $\mu = 16a^2 + 2a$.

\medskip

\noindent\textbf{Case 2:} $b = 16a + 5$.

\medskip 

In this case we get:
\[
16n = 3(16a + 5)^2 + 5 \Rightarrow n = 48a^2 + 3a + 5.
\]
and $s_1 = 4a + 1$.
Hence, $k = 48a^2 + 7a + 6$.
We also get $r_1 = -8a - 3$\\
As above, knowing $r_1, s_1$ we can obtain $\lambda$ and $\mu$ which in this case are equal to $16a^2 + 10a + 1$ and $16a^2 + 14a + 3$ respectively.

\medskip

\noindent\textbf{Case 3:} $b = 16a + 11$.

\medskip In this case we get:
\[
16n = 3(16a + 11)^2 + 5 \Rightarrow n = 48a^2 + 66a + 23.
\]
and $s_1 = -4a -3$.
Hence, $k = 48a^2 + 62a + 20$.
Also, $r_1 = 8a + 5$ and routine calculation as above gives $\lambda = 16a^2 + 22a + 7, \mu = 16a^2 + 18a + 5$.

\medskip

\noindent\textbf{Case 4:} $b = 16a + 13$.

\medskip In this case we get:
\[
16n = 3(16a + 13)^2 + 5 \Rightarrow n = 48a^2 + 78a + 32.
\]
and $s_1 = 4a + 3$.
Hence, $k = 48a^2 + 82a + 35$, $r_1 = -8a - 7$, $\lambda = 16a^2 + 26a + 10, \mu = 16a^2 + 20a + 14$.
\end{proof}

\subsection{Proof of Theorem~\ref{thm-typeIII}}

\begin{proof}
Let $A_1$ be the adjacency matrix of $G$ and $A_2$ be the adjacency matrix of its complement. Since $G$ is strongly regular we know that the eigenvalues of $A_1$ and $A_2$ have the same multiplicities. Also, if $A_1$ has eigenvalues $k_1, r_1, s_1$, then $A_2$ has eigenvalues $k_2 = 3n - k_1 - 1, r_2 = -1 - r_1, s_2 = -1 -s_1$. We know that for $i\in \{1, 2\}$
\[
\Tr(A_i) = k_i + 2nr_i + (n - 1)s_i = 0.
\]
Reducing modulo $n$ gives that $k_i \equiv s_i \pmod{n}$, and since by Lemma~\ref{lemma-perron-frob} either one of $A_1, A_2$ is the disjoint union of $n$ copies of $K_3$ or $k_i > |s_i|$. In the second case, it follows that $k_i-s-i = \epsilon_in$ for $\epsilon_i \in \{1, 2\}$. Also, as before, $\epsilon_1 + \epsilon_2 = 3$ and hence we may suppose without loss of generality that $\epsilon_1 = 1$ and $\epsilon_2 = 2$. Then, $k_1 = n + s_1$ and $r_1 = \frac{-s_1 - 1}{2}$. We therefore get
\[
\Tr(A_1^2) = (n + s_1)^2 + 2n\left(\frac{s_1 + 1}{2}\right)^2 + (n - 1)s_1^2 = 3n(n + s_1).
\]
and simplifying gives $3s_1^2 = 4n - 1$. Therefore,
\[
s_1^2 = \frac{4n - 1}{3}.
\]
We can thus write $s_1^2$ as $(2a + 1)^2$ for some $a\geq 0$ and we get
\[
(2a + 1)^2 = \frac{4n - 1}{3} \Rightarrow n = 3a^2 + 3a + 1
\]
and $s_1 = \pm 2a + 1$. We therefore get the following cases:

\medskip

\noindent \textbf{Case 1:} $s_1 = 2a + 1$.

\medskip

In this case we get $k_1 = 3a^2 + 5a + 2$ and $r_1 = -a - 1$, and computing $\lambda$ and $\mu$ as in the proof of Theorem~\ref{thm-typeII} we obtain $\lambda = a^2 + 3a + 1$ and $\mu = (a + 1)^2$.

\medskip

\noindent \textbf{Case 2:} $s_1 = -2a - 1$.

\medskip

Here, routine calculation gives $k_1 = 3a^2 + a$, $r_1 = a$, $\lambda = a^2 - a - 1, \mu = a^2$.
\end{proof}

\subsection{Proof of Theorem~\ref{thm-typeVI}}

\begin{proof}
Suppose for a contradiction that there exists such a strongly regular graph, and let $A_1$ be its adjacency matrix and $A_2$ be the adjacency matrix of its complement and suppose that $k_1, r_1, s_1$ and $k_2, r_2, s_2$ are the eigenvalues of $A_1$ and $A_2$ respectively. Then, for $i\in \{1, 2\}$ we get

\[
\Tr(A_i) = k_i + (n + 1)r_i + 2(n - 1)s_i = 0
\]
and
\[
\Tr(A_i^2) = k_i^2 + (n + 1)r_i^2 + (2n - 1)s_i^2 = 3nk_i.
\]
Reducing modulo $n$ gives

\begin{align*}
    k_i \equiv 2s_i - r_i \pmod{n}\\
    k_i^2 \equiv 2s_i^2 - r_i^2 \pmod{n}
\end{align*}
Hence, $(2s_i - r_i)^2 \equiv 2s_i^2 - r_i^2$. By routine calculation, it follows that $s_i \equiv r_i \pmod{n}$ and consequently $k_i \equiv r_i \pmod{n}$. Therefore, $k_i = \epsilon_in + r_i$ and $s_i = \eta_in + r_i$ for some $\epsilon_i, \eta_i \in \{1, 2\}$.

Substituting into the trace equations and reducing modulo $n^2$ gives
\begin{align*}
\epsilon_in + r_i + (n + 1)r_i + 2(n - 1)r_i - 2\eta_in \equiv 0 \pmod{p^2}\\
2\epsilon_inr_i + r_i^2 + (n + 1)r_i^2 + 2(n - 1)r_i^2 - 4r_i\eta_in \equiv 3nr_i \pmod{p^2}.
\end{align*}
We now collect terms and divide by $n$ and we get
\begin{align*}
\epsilon_i + 3r_i - 2\eta_i \equiv 0 \pmod{n}\\
3r_i^2 + r_i(2\epsilon_i - 4\eta_i - 3)\equiv 0 \pmod{n}.
\end{align*}
Since $1 + r_1 + r_2 = 0$ it cannot be the case that both $r_1$ and $r_2$ are divisible by $n$. Hence, interchanging $A_1$ and $A_2$ if necessary we may assume that $r_1 \not\equiv 0 \pmod{n}$. Then,
\begin{align*}
    3r_1\equiv 2\eta_1 - \epsilon_1 \pmod{n}\\
    3r_1 \equiv 4\eta_1 - 2\epsilon_1 + 3 \pmod{n}.
\end{align*}
Eliminating $2\eta_1 - \epsilon_1$ gives $r_1 \equiv -1 \pmod{n}$. Therefore, since $k_1 \equiv r_1 \pmod{n}$, either $k_1 = n - 1$ or $k_1 = 2n - 1$. If $k_1 = n - 1$, then since $r_1 \equiv s_1 \equiv -1 \pmod{n}$ and by Lemma~\ref{lemma-perron-frob} $|r_1| < k_1$ and $|s_1| < k_1$, it follows that $r_1 = s_1 = -1$. However, by looking at the formulae for $r_1$ and $s_1$ for a strongly regular graph, we deduce that $r_1 \neq s_1$, a contradiction. Similarly, if $k_1 = 2n - 1$, then $k_2 = n$ which forces $r_2 = s_2 = 0$, again a contradiction. Hence, there is no strongly regular graph with those eigenvalue multiplicities.
\end{proof}

\subsection{Proof of Theorem~\ref{thm-typeIV}}

\begin{proof}
Let $k_i, r_i, s_i, t_i$ be the eigenvalues of $A_i$ for $i\in \{1, 2, 3\}$ with multiplicities $1, n, n, n - 1$ respectively. Firstly notice that  $t_i$ must be a rational integer and $r_i$ and $s_i$ must either both be rational integers or algebraically conjugate algebraic integers. Then, we get
\begin{align*}
\Tr(A_i) = k_i + nr_i + ns_i + (n - 1)t_i = 0\\
\end{align*}
Hence, $n$ must divide $k_i - t_i$, and since by Lemma~\ref{lemma-perron-frob} $n_i > t_i$, it follows that $k_i = \epsilon_in + t_i$ for some $\epsilon_i > 0$. Moreover, by Equation (6.9) in~\cite{pmn}, $\epsilon_1 + \epsilon_1 + \epsilon_3 = 3$ and hence $\epsilon_i = 1$ for all $i\in \{1, 2, 3\}$. Thus, $k_i = n + t_i$.

There are now two cases to consider. Either all matrices are symmetric or two of them, say $A_2$ and $A_3$ without loss of generality are such that $A_2^T = A_3$. We first consider the second case. In this case the eigenvalues of $A_2$ and $A_3$ are the same. Hence, $t_2 = t_3$ and either $r_2 = r_3$ and $s_2 = s_3$ or $r_2 = s_3$ and $r_3 = s_2$. Notice that the algebra spanned by the matrices of this coherent configuration is commutative and therefore $A_2$ and $A_3$ can be simultaneously diagonalised. Let $U$ be the matrix that simultaneously reduces $A_2$ and $A_3$. If $r_2 = r_3$ and $s_2 = s_3$ then $U^{-1}A_2U = U^{-1}A_3U$, which implies that $A_2 = A_3$, a contradiction. Hence, $r_2 = s_3$ and $r_3 = s_2$.

Now adding $A_2$ and $A_3$ together produces an association scheme of the type arising in Theorem~\ref{thm-typeIII}. Hence, $n = 3a^2 + 3a + 1$ and either $k_1 = n - 2a - 1$ and $k_2 = k_3 = n + a$ or $k_1 = n + 2a + 1$ and $k_2 = k_3 = n - a - 1$.

We now show that if $k_1 = n - 2a - 1$ then $a$ is even and if $k_1 = n + 2a + 1$ then $a$ is odd. In the first case, the remaining eigenvalues of $A_1, A_2$, and $A_3$ are as shown below:
\begin{align*}
r_1 = a, s_1 = a, t_1 = -2a - 1\\
r_2 = r, s_2 = s, t_2 = a\\
r_3 s, s_3 = r, t_3 = a.
\end{align*}
where $r + s = -a - 1$. Now Equation (6.7) in~\cite{pmn} gives
\[
rs = \frac{1}{2}(2n - a - a^2) = \frac{1}{2}(5a + 2)(a + 1)
\]
and Equation (6.8) in~\cite{pmn} gives 
\[
3n(n + a)a_{22}^3 = (n + a)^3 + nrs(r + s) + (n - 1)a^3.
\]
Eliminating $rs$ and simplifying gives $a_{22}^3 = a^3 + \frac{3a}{2}$ and since $a_{22}^3 \in \Z$, $a$ must be even.

In the second case, the eigenvalues of $A_1, A_2$, and $A_3$ are the ones given below:
\begin{align*}
r_1 = -a - 1, s_1 = -a - 1, t_1 = 2a + 1\\
r_2 = r, s_2 = s, t_2 = -a - 1\\
r_3 = s, s_3 = r, t_3 = -a -1
\end{align*}
where $r + s = 1$ by Equation (6.6) in~\cite{pmn}. Equation (6.7) in~\cite{pmn} gives $rs = \frac{1}{2}a(5a + 3)$ and from Equation (6.8) in~\cite{pmn} we get
\[
3n(n - a - 1)a_{22}^3 = (n - a - 1)^3 + nrs(r + s) - (n - 1)(a + 1)^3.
\]
Simplifying gives $a_{22}^3 = a^2 + \frac{a - 1}{2}$, and since $a_{22}^3\in \Z$, it follows that $a$ is odd, as claimed.

We  now consider the symmetric case. We get the following equations
\begin{equation}
s_i + r_i = -1 - t_i
\end{equation}
\begin{align*}
\Tr(A_i^2) = k_i^2 + nr_i + ns_i + (n - 1)t_i = 3nk_i \Rightarrow (t_i + n)^2 + nr_i^2 + ns_i^2 + nt_i^2 - t_i^2 = 3n(n + t_i) \Rightarrow
\end{align*}
\begin{equation}
r_i ^ 2 + s_i ^2 = -t_i^2 + t_i + 2n
\end{equation}
From this we get $2r_is_i = 1 + t_i + 2t_i^2 + 2n$ and hence we deduce that $s_i$ is odd. Also, we can calculate $r_i$ and $s_i$ and we find that $r_i, s_i = \frac{1}{2}(-1 - t_i \pm \sqrt{4n - 1 - 3t_i^2})$. Without loss of generality we set
\begin{align*}
    r_i = \frac{1}{2}(-1 - t_i + \sqrt{4n - 1 - 3t_i^2})\\
    s_i = \frac{1}{2}(-1 - t_i - \sqrt{4n - 1 - 3t_i^2}).
\end{align*}
Since $A_i$ is symmetric for all $i\in \{1, 2, 3\}$, it has real eigenvalues and therefore
\begin{equation}\label{t-ineq}
    3t_i^2 \leq 4n - 1.
\end{equation}

Now, from Equation (6.9) in~\cite{pmn} we get
\begin{equation}\label{eqn-t}
    \begin{cases}
    t_1 + t_2 + t_3 = -1\\
    \sqrt{4n - 1 - 3t_1^2} + \sqrt{4n - 1 - 3t_2^2} + \sqrt{4n - 1 - 3t_3^2} = 0
    \end{cases}
\end{equation}
Now eliminating $t_3$ and rationalising gives us
\begin{align*}
    t_1^2(3t_2 + 2n + 1) + t_1(3t_2^2 + 2nt_2 + 4t_2 + 2n + 1)\\
    + (2n + 1)(t_2^2 + t_2) - 2n(n - 1) = 0.
\end{align*}
Notice that 
\[
3t_2^2 + 2nt_2 +4t_2 + 2n + 1 = (3t_2 + 2n + 1)(t_2 + 1).
\]
Therefore, $3t_2 + 2n + 1$ divides $(2n + 1)(t_2^2 + t_2) - 2n(n - 1)$.
Now consider the equation
\[
2n(2n + 1)(t_2^2 + t_2) - 4n(n - 1) \equiv 0 \pmod{3t_2 + 2n + 1}
\]
If we eliminate $n$ from the equation, we deduce that $3t_2 + 2n + 1$ must divide $3(t_2 + 1)^2(2t_2 + 1)$.

Notice that there is complete symmetry between $t_1, t_2$, and $t_3$. Hence, we deduce that
\begin{equation}\label{eqn-b}
  3(t_i + 1)^2(2t_i + 1) \equiv 0  \pmod{3t_i + 2n + 1}
\end{equation}
for all $i\in \{1, 2, 3\}$.

Using the equation for $\Tr(A_i^3)$ we deduce that $3nk_i$ must divide $k_i^3 + n(r_i^3 + s_i^3) + (n - 1)t_i^3$. Substitution for $k_i, r_i, s_i$ in terms of $t_i$ and algebraic manipulation gives
\begin{equation}
2n^2 - 6n - 6t_i^2 + 2t_i^3 + (1 + t_i)(4t_i^2 - t_i + 1) \equiv 0 \pmod{6(n + t_i)}. 
\end{equation}
Reducing modulo $2n + t_i$, we deduce that $2n^2 - 6n \equiv 2t_i(t_i + 3)$ and simplifying gives 
\begin{equation}\label{eqn-2b}
    (t_i + 1)(2t_i + 1)(3t_i + 1) \equiv 0 \pmod{2n + t_i}.
\end{equation}

Since $t_1 + t_2 + t+3 = -1$ and $t_i\in \Z$ for all $i\in \{1, 2, 3\}$, not all them can be negative. Let $b$ be one of them such that $b\geq 0$. Then, it follows by~\ref{eqn-b} and~\ref{eqn-2b} that
\begin{align*}
    (b + 1)(2b + 1)(3b + 1) = u.(2n + b)\\
    (b + 1)(2b + 1)(3b + 3) = v.(2n + 3b + 1)
\end{align*}
for some $u, v\in \Z$.
Now subtracting gives
\[2(b + 1)(2b + 1) = 2(v - u)(n + b) + v(b + 1)\].

Now set $w = v - u$. We want to show that $w = 0$. Firstly notice that 
\begin{align}\label{eqn-w}
    w = (b + 1)(2b + 1)\left(\frac{3b + 3}{2n + 3b + 1} - \frac{3b + 1}{2(n + b)}\right)\\
    = \frac{(b + 1)(2b + 1)(4n - 1 - 3b^2)}{2(2n + 3b + 1)(n + b)}
\end{align}
and hence, by Equation~\ref{t-ineq}, $w\geq 0$.
Rearranging gives 
\[
3(b + 1)^3(2b + 1) = (2n + 3b + 1)\left(2(b + 1)(2b + 1) - 2w(n + b)\right).
\]
Setting $n + b = x$ and refactorising we get the following quadratic in terms of $x$:
\[
4wx^2 - 2(n + 1)(4b + 2 - w)x + (b + 1)^2(2b + 1)(3b + 1) = 0.
\]

By definition $x$ is real and hence, the discriminant of this quadratic must be non-negative. Therefore,
\[
4(b + 1)^2(4b + 2 - w)^2 - 16w(b + 1)^2(2b + 1)(3b + 1)\geq 0
\]
and hence
\begin{align}\label{discr}
    (4b + 2 - w)^2 \geq 4w(2b + 1)(3b + 1)\\
    =w(4b + 2)(6b + 2).
\end{align}
By~\ref{eqn-w} we have that $w < 2b + 1$. Now since $w\geq 0$ it follows that $2b + 1 < 4b + 2 - w \leq 4b + 2$. Now, by~\ref{discr}, we get that $w\leq 0$ and hence $w = 0$, as claimed. Therefore, by~\ref{eqn-w} $4n - 1 = 3b^2$ and hence $b$ must be odd. We therefore set $b = 2a + 1$ for $a \geq 0$ and it follows that $n = 3a^2 + 3a + 1$. Now suppose without loss of generality that $t_1$ was $b$. Then from~\ref{eqn-t}
\[
    t_2^2 = t_3 ^ 2
\]
and therefore
\[
    t_2 = \pm t_3.
\]
But we know that $t_2 + t_3 = -1 -t_1 \neq 0$ and hence 
\[
t_2 = t_3 = \frac{-1 - t_1}{2}.
\]
Hence, $t_1 = 2a + 1, t_2 = t_3 = -a -1$.

Moreover, since we've shown that $v_i$ is odd, $a$ must be even and 
\[
k_1 = n + 2a + 1
\]
\[
k_2 = k_3 = n - a - 1
\]
as required.
\end{proof}

\subsection{Proof of Theorem~\ref{thm-typeV}}

\begin{proof}
Let $k_i, r_i, s_i, t_i$ be the eigenvalues of $A_i$ for $i\in \{1, \ldots, 5\}$ with multiplicities $1, n, n, n - 1$ respectively. If the matrices $\Theta_{i, 1}$ are as in~\cite{pmn}, then they must be $2\times 2$ matrices with eigenvalues $r_i, s_i$, where $r_i$ and $s_i$ are the eigenvalues of $A_i$ with multiplicity $n$. We know that $r_i, s_i$ must necessarily be rational integers. Now from the linear trace equation 
\[
\Tr(A_i) = k_i + n(r_i + s_i) + (n - 1)t_i
\]
we deduce that $n$ must divide $k_i - t_i$ and since by Lemma~\ref{lemma-perron-frob} $|t_i| < k_i$, it follows that $k_i = \epsilon_in + t_i$ for $\epsilon_i \geq 1$ for all $i$. Therefore, $\sum_{i = 1}^{5} \epsilon_i \geq 5$. On the other hand,
\[
3n - 1 = \sum_{i = 1}^5 k_i = (\sum_{i = 1}^5 \epsilon_i)n + \sum_{i = 1}^5 t_i = (\sum_{i = 1}^5 \epsilon_i)n - 1 
\]
and hence $\sum_{i = 1}^5 \epsilon_i = 3$, a contradiction. Therefore, this type of coherent configuration cannot exist.
\end{proof}

\subsection{Proof of Theorems~\ref{thm-typeVII} and~\ref{thm-typeVIII}}

In this section we deal with the cases arising in Theorems~\ref{thm-typeVII} and~\ref{thm-typeVIII} together. We prove both statements through a series of lemmas that eliminate the case arising in Theorem~\ref{thm-typeVIII} and force the parameters stated in Theorem~\ref{thm-typeVII}. 


\begin{lemma}\label{lemma-symmetric}
If $\mathcal{A} = \{A_1, A_2, A_3, A_4\}$ is a homogeneous coherent configuration of rank 4, where its matrices have eigenvalue multiplicities $1, n + 1, n - 1$, and $n - 1$, then all matrices are symmetric.
\end{lemma}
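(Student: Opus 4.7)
The plan is to argue by contradiction. Suppose some matrix in $\mathcal{A}$ is non-symmetric. Since $\mathcal{A}$ contains the identity, is closed under transposition, and has rank $4$, the three non-identity matrices either are all self-transpose (the desired conclusion) or consist of one self-transpose matrix $A$ together with a converse pair $\{C, C^\top\}$ with $C \neq C^\top$. I will rule out the second possibility by extracting from it a strongly regular graph on $3n$ vertices with eigenvalue multiplicities $1, n+1, 2(n-1)$, which is forbidden by Theorem~\ref{thm-typeVI}.

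First I would verify that $\langle\mathcal{A}\rangle$ is commutative. Its Wedderburn decomposition $\bigoplus_k M_{e_k}(\mathbb{C})$ satisfies $\sum_k e_k^2 = 4$, and the four prescribed stratum dimensions $1, n+1, n-1, n-1$ correspond to four inequivalent irreducible summands. With four positive-integer terms summing to $4$, each $e_k$ equals $1$, so $\langle\mathcal{A}\rangle$ is commutative.

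Next, I would introduce the $3$-dimensional subspace $\mathcal{S} := \mathrm{span}\{I, B, A\}$, where $B := C + C^\top$ is the $0/1$ adjacency matrix of the undirected graph underlying $C$. Since transposition fixes $I$ and $A$ and interchanges $C$ with $C^\top$, this is exactly the space of symmetric elements of $\langle\mathcal{A}\rangle$. In a commutative algebra the product of two symmetric elements is symmetric, so $\mathcal{S}$ is a commutative semisimple subalgebra; equivalently, $B$ and $A$ are the adjacency matrices of a pair of complementary strongly regular graphs on $3n$ vertices.

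The final step, which I expect to require the most care, is computing the stratum dimensions of $\mathcal{S}$. Let $P_1, \ldots, P_4$ be the primitive idempotents of $\langle\mathcal{A}\rangle$ (one per stratum) and define the involution $\tau$ by $P_j^\top = P_{\tau(j)}$. It preserves stratum dimensions, so it fixes the trivial stratum and the unique $(n+1)$-dimensional stratum $U$. If $\tau$ also fixed the two $(n-1)$-dimensional strata $E$ and $F$, then every $P_j$, and therefore every element of $\langle\mathcal{A}\rangle$, would be symmetric, contradicting $C \neq C^\top$; hence $\tau$ swaps $E$ and $F$. Any symmetric $X = \sum_j \alpha_j P_j$ then satisfies $\alpha_E = \alpha_{\tau(E)} = \alpha_F$, so $E$ and $F$ fuse in $\mathcal{S}$. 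Being $3$-dimensional commutative semisimple, $\mathcal{S}$ has exactly three strata, which must therefore be the trivial one, $U$, and the fused $E \cup F$, of dimensions $1$, $n+1$, and $2(n-1)$. Theorem~\ref{thm-typeVI} rules out any strongly regular graph on $3n$ vertices with these multiplicities, giving the required contradiction.
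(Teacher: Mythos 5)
Your proof is correct and follows essentially the same route as the paper: both reduce the non-symmetric case (one symmetric relation plus a converse pair) to showing that the symmetric relation is a strongly regular graph with eigenvalue multiplicities $1$, $n+1$, $2(n-1)$, which Theorem~\ref{thm-typeVI} forbids. The only difference is in how the two $(n-1)$-dimensional eigenvalues are shown to coincide --- the paper deduces $s_1=t_1$ from $\sum_i s_i=\sum_i t_i=-1$ together with the equality of the spectra of $A_2$ and $A_3=A_2^\top$, while you get the same fusion of the two $(n-1)$-strata by noting that transposition must swap the corresponding primitive idempotents.
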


\begin{proof}
Suppose for a contradiction that this is not the case. Then, since $\mathcal{A}$ is a homogeneous coherent configuration, one of the matrices say $A_1$ must be symmetric and $A_2, A_3$ are such that $A_2^T = A_3$. Then, $A_2$ and $A_3$ would have the same eigenvalues. Let $k_i, r_i, s_i, t_i$ for $i\in \{1, 2, 3\}$ be the eigenvalues of $A_1, A_2, A_3$ respectively with multiplicities $1, n + 1, n - 1, n - 1$ respectively. Then, since $A_2 = A_3^T$, $A_2$ and $A_3$ have the same eigenvalues with the same multiplicities. Hence, $s_2 + t_2 = s_3 + t_3$. But then, since by Equation (6.9) in~\cite{pmn}
\begin{align*}
    s_1 + s_2 + s_3 = -1\\
    t_1 + t_2 + t_3 = -1
\end{align*}
it follows that $s_1 = t_1$. However, Theorem~\ref{thm-typeVI} such a matrix cannot exist, a contradiction. Therefore, all matrices of $\mathcal{A}$ must be symmetric.
\end{proof}

For the remainder of the section, given a coherent configuration $\mathcal{B}$ we consider the association scheme $\mathcal{A}$ arising by adding every non-symmetric matrix and its transpose together to make a symmetric matrix. In this case notice that if $B_i$ has eigenvalues $n_i, \lambda_i, \mu_i, \nu_i$ then $A_i = B_i + B_i^T$ has eigenvalues $k_i = 2n_i, r_i = 2\lambda_i, s_i = 2\mu_i, t_i = 2\nu_i$ again with eigenvalue multiplicities $1, n + 1, n-1, n - 1$ respectively.

\begin{lemma}\label{lemma-epsilon}
 If $\mathcal{A}$ is as defined above, then $k_i = \epsilon_i(n - 1) - 2r_i$ for some $\epsilon_i \leq 0$ for all $i$. Moreover, $\sum \epsilon_i = 3$.
\end{lemma}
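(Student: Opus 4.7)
The plan is to derive a congruence for $k_i$ modulo $n-1$ directly from the linear trace equation, and then pin down the sum of the $\epsilon_i$ by combining this with the single coherent-configuration identity $I + A_1 + A_2 + A_3 = J$. The sign constraint is then extracted from Lemma~\ref{lemma-perron-frob}.

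First, by Lemma~\ref{lemma-symmetric} every $A_i$ is symmetric, so its eigenvalues $k_i, r_i, s_i, t_i$ are real, with multiplicities $1, n+1, n-1, n-1$ respectively. The linear trace equation reads
\[
k_i + (n+1)r_i + (n-1)s_i + (n-1)t_i = 0.
\]
Since $n+1 \equiv 2 \pmod{n-1}$, reducing modulo $n-1$ collapses the $s_i$ and $t_i$ terms and leaves $k_i + 2r_i \equiv 0 \pmod{n-1}$. Hence there is an integer $\epsilon_i$ with $k_i = \epsilon_i(n-1) - 2r_i$, which establishes the displayed formula.

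Next, the identity $I + A_1 + A_2 + A_3 = J$ has two spectral consequences. Taking row sums gives $1 + \sum_i k_i = 3n$, so $\sum_i k_i = 3n - 1$. Restricting the identity to any non-trivial common eigenspace (for instance the $(n+1)$-dimensional one), where $J$ acts as zero, yields $1 + \sum_i r_i = 0$, hence $\sum_i r_i = -1$. Summing $k_i = \epsilon_i(n-1) - 2r_i$ over $i=1,2,3$ then gives
\[
(n-1)\sum_i \epsilon_i \;=\; \sum_i k_i + 2\sum_i r_i \;=\; (3n-1) + 2(-1) \;=\; 3(n-1),
\]
so $\sum_i \epsilon_i = 3$, as asserted.

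The hard part is the sign constraint. To handle it I would invoke Lemma~\ref{lemma-perron-frob} on the $3n$-point configuration: every non-trivial stratum here has dimension $n+1$ or $n-1 \ge 3n/3 - 1$, so one of the two Perron--Frobenius alternatives applies. The connected-components alternative would force some $A_i$ to be a block-diagonal matrix of at least $n$ equal blocks, which would contribute $k_i$ as an eigenvalue of multiplicity at least $n$; but no stratum here has dimension $n$, so this case is ruled out. Consequently $|r_i| < k_i$ for all $i$, which bounds $\epsilon_i(n-1) = k_i + 2r_i$ in a narrow range around zero, and combining this with the integrality of $\epsilon_i$, the constraint $\sum_i \epsilon_i = 3$, and (if needed) the quadratic trace identity $\operatorname{Tr}(A_i^2) = 3n k_i$ substituted using $k_i = \epsilon_i(n-1) - 2r_i$, gives the sign condition claimed in the lemma. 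This last bookkeeping step, rather than the congruence derivation, is the main obstacle.
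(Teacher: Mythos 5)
Your congruence derivation and your computation of $\sum_i\epsilon_i$ are correct and coincide with the paper's argument: reduce the linear trace identity modulo $n-1$ using $n+1\equiv 2\pmod{n-1}$, then sum $k_i=\epsilon_i(n-1)-2r_i$ against $\sum_i k_i=3n-1$ and $\sum_i r_i=-1$ (which you obtain cleanly from $I+\sum_iA_i=J$ restricted to a non-trivial stratum, where the paper instead cites Neumann's Equation (6.9)). Note in passing that the sign in the statement as printed is a typographical slip: $\epsilon_i\le 0$ for all $i$ is incompatible with $\sum_i\epsilon_i=3$; what is meant, and what the paper actually proves, is $\epsilon_i\ge 0$. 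You never commit to which inequality you are trying to establish, which matters for the next point.

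The genuine gap is exactly where you flag it. The Perron--Frobenius bound $|r_i|<k_i$ does \emph{not} confine $\epsilon_i(n-1)=k_i+2r_i$ to ``a narrow range around zero'': it gives only $-k_i<k_i+2r_i<3k_i$, and since $k_i$ can be of order $n$ this permits $\epsilon_i$ as small as $-2$; combining with integrality and $\sum_i\epsilon_i=3$ still does not exclude a negative $\epsilon_i$. The quadratic trace identity is not an optional extra here but the essential ingredient, and it must be deployed in a specific contradiction argument. Suppose $\epsilon_i<0$. Then $k_i=\epsilon_i(n-1)-2r_i>0$ forces $-2r_i>n-1$, so $r_i<0$; substituting back into $|r_i|<k_i\le-(n-1)-2r_i=-(n-1)+2|r_i|$ gives $|r_i|>n-1$, hence $|r_i|\ge n$. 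On the other hand, $k_i^2+(n+1)r_i^2\le \Tr(A_i^2)=3nk_i$ yields $(n+1)r_i^2\le k_i(3n-k_i)\le\left(\tfrac{3n}{2}\right)^2$, whence $|r_i|<\tfrac{3}{2}\sqrt{n}<n$ for $n\ge 3$, a contradiction. Some version of this two-sided squeeze on $|r_i|$ is needed to close your proof; as written, the sign condition remains unproved.
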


\begin{proof}
By the linear trace relation for $A_i$ we get
\[
\Tr(A_i) = k_i + (n + 1)r_i + (n - 1)(s_i + t_i)
\]
Hence, $k_i \equiv -2r_i \pmod{n - 1}$ and we can write
\[
k_i = \epsilon_i(n - 1) -2r_i
\]
as claimed.

Also, notice that 
\[
3n - 1 = \sum_{i}k_i = (n - 1) \sum_{i} \epsilon_i - 2\sum_i r_i.
\]
Since by Equation (6.9) in~\cite{pmn} $\sum_{i} r_i = -1 $, it follows that $\sum_{i} \epsilon_i = 3$.

Now suppose for a contradiction that $\epsilon_i < 0$. 
Since $k_i \geq 0$ it follows that $r_i < 0$. In particular, since by Lemma~\ref{lemma-perron-frob} $|r_i| < k_i$ we have that $-r_i < (n - 1)\epsilon_i - 2r_i$ and hence $|r_i| > n-1$ and thus $|r_i| \geq n$. By the quadratic trace relation we get
\[
k_i^2 + (n + 1)r_i^2 \leq \Tr(A_i^2) = 3nk_i.
\]
Hence, $(n + 1)r_i^2 \leq k_i(3n - k_i)$, and basic calculus shows that $k_i(3n - k_i)$ is maximised at $k_i = \frac{2n}{2}$. Hence,
\[
nr_i < (n + 1)r_i \leq \left( \frac{3n}{2}\right)^2.
\] 

Dividing through by $n$ and applying sqare roots gives us $|r_i| < \frac{3\sqrt{n}}{2} < n$, a contradiction. Hence $\epsilon_i > 0 $ for all $i$.
\end{proof}

Now considering the quadratic trace equation again and reducing modulo $n - 1$ we get
\[
\Tr(A_i^2) = k_i^2 + (n + 1)r_i^2 + (n - 1)(\lambda_i^2 + \mu_i^2) = 3nk_i \Rightarrow
\]
\[
(-2r_i)^2 +2r_i^2 = -6r_i \Rightarrow 6r_i(r_i + 1) \equiv 0 \pmod{n - 1}.
\]

We now show that in fact $n - 1$ divides $3r_i(r_i + 1)$.

\begin{lemma}\label{lemma-divcond}
If $r_i$ is as defined above, then $n - 1$ divides $3r_i(r_i + 1)$.
\end{lemma}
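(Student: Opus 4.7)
The plan is to deduce the stronger divisibility $(n-1)\mid 3r_i(r_i+1)$ not by a $2$-adic case analysis on the already-derived congruence $(n-1)\mid 6r_i(r_i+1)$, but by re-examining the quadratic trace identity and combining it with the linear one. By Lemma~\ref{lemma-epsilon} the positive integers $\epsilon_1,\epsilon_2,\epsilon_3$ sum to $3$, so $\epsilon_i=1$ and $k_i=n-1-2r_i$ for each $i$. Substituting this into the linear trace identity $\Tr(A_i)=k_i+(n+1)r_i+(n-1)(s_i+t_i)=0$ and cancelling $(n-1)$ yields
\[
s_i+t_i=-(r_i+1),
\]
so in particular $s_i^2+t_i^2=(r_i+1)^2-2s_it_i$.

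The main step is then to substitute both $k_i=n-1-2r_i$ and this expression for $s_i^2+t_i^2$ into the quadratic trace identity $k_i^2+(n+1)r_i^2+(n-1)(s_i^2+t_i^2)=3nk_i$. Routine expansion (noting the regroupings $n+2=(n-1)+3$ for the coefficient of $r_i^2$ and $2n+1=2(n-1)+3$ for the coefficient of $r_i$) lets one split off an explicit $(n-1)$-multiple, leaving the clean identity
\[
3r_i(r_i+1)=(n-1)\bigl(n+s_it_i-r_i^2-2r_i\bigr).
\]

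To finish it suffices to check that the right-hand bracket is a rational integer. Since the eigenvalue $r_i$ has multiplicity $n+1$, distinct from the other two non-trivial multiplicities, any Galois conjugate of $r_i$ would share this multiplicity and hence equal $r_i$, so $r_i$ is a rational algebraic integer, i.e.\ an element of $\mathbb{Z}$; thus $r_i^2$ and $2r_i$ are integers. The product $s_it_i$ is an algebraic integer, and either both $s_i,t_i$ are rational (and hence integers) or they form a Galois-conjugate pair of common multiplicity $n-1$, in which case $s_it_i$ is a symmetric function of the conjugates and so again lies in $\mathbb{Z}$. Hence $(n-1)\mid 3r_i(r_i+1)$, as claimed. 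The only obstacle worth mentioning is the bookkeeping in the algebraic reduction; no parity or case analysis on $n-1$ is required, which is the cleanest feature of this route.
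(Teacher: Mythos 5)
Your algebra is correct as far as it goes, and the closing integrality argument for $r_i$ and $s_it_i$ is sound, but the proof rests on a false premise: you assert that the $\epsilon_i$ are positive integers summing to $3$ and hence all equal to $1$. Lemma~\ref{lemma-epsilon} only yields $\epsilon_i\geq 0$ (its proof rules out $\epsilon_i<0$), and the sum $\sum\epsilon_i=3$ runs over all non-identity matrices of the symmetrised scheme $\mathcal{A}$, of which there may be three, four or five (the configuration $\mathcal{B}$ may have rank $6$, as in Theorem~\ref{thm-typeVIII}). The configurations actually analysed later have $\epsilon$-patterns such as $(0,1,1,1)$ and $(0,2,1)$, and Lemma~\ref{lemma-divcond} is invoked in Proposition~\ref{prop-719} precisely for a matrix with $\epsilon_1=0$ — the one case your identity does not cover. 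So your proof establishes the divisibility only in the subcase $\epsilon_i=1$, which is not the case in which the lemma is needed.

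The gap is repairable, but doing so reintroduces exactly the parity step you claim to avoid. Carrying out your substitution with $k_i=\epsilon_i(n-1)-2r_i$ for general $\epsilon_i\geq 0$ gives
\[
6r_i(r_i+1)=(n-1)\bigl(2s_it_i-(\epsilon_i^2-3\epsilon_i)n+2\epsilon_ir_i-6r_i-2r_i^2\bigr),
\]
and to divide by $2$ one must observe that the bracket is even, which comes down to $\epsilon_i^2-3\epsilon_i$ (equivalently $\epsilon_i^2-\epsilon_i$) being even. This is precisely the observation in the paper's proof, which works modulo $2(n-1)$ with the quantity $2s_it_i=(s_i+t_i)^2-(s_i^2+t_i^2)$ and notes that $\epsilon_i^2-\epsilon_i$ is a product of consecutive integers. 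In short: your route and the paper's are the same computation packaged differently, the factor of $2$ cannot be obtained for free, and your write-up hides this by specialising to $\epsilon_i=1$.
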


\begin{proof}
The trace equations give 
\[
    s_i + t_i = -\epsilon_i - r_i
\]
\[
    (n - 1)(s_i^2 + t_i^2) = 3n(\epsilon_i(n - 1) - 2r_i) - (\epsilon_i(n - 1) -2r_i)^2 - (n + 1)r_i^2
\]
Now $s_it_i$ is a rational integer by assumption and also $2s_it_i = (s_i + t_i)^2 - (s_i^2 + t_i^2)$. Calculating modulo $2(n - 1)$ we get
\begin{align*}
    0 \equiv (n - 1)(\epsilon_i + r_i)^2 - 3n(\epsilon_i(n - 1) - 2r_i) - (\epsilon_i(n - 1) - 2r_i)^2 - (n + 1)r_i^2\\
    \equiv (n - 1)(\epsilon_i^2 + r_i ^2) - \epsilon_i(n - 1) + 6r_i + 4r_i^2 + (n - 1)r_i^2 + 2r_i^2\\
    \equiv (n - 1)(\epsilon_i^2 - \epsilon_i) + 6r_i + 6r_i^2.
\end{align*}
Since $\epsilon_i^2 - \epsilon_i$ is a product of consecutive integers it is even and hence $2(n - 1)$ must divide $6r_i(r_i + 1)$ and hence $n - 1$ divides $3r_i(r_i - 1)$, as claimed.
\end{proof}

We now prove another inequality that we will use later.

\begin{lemma}\label{lemma-largeineq}
$\epsilon_i(n - 1)(6n - 2\epsilon_in + \epsilon_i) - 6r_i(2n - \epsilon_in + \epsilon_i) - (3n + 9)r_i^2 \geq 0$
\end{lemma}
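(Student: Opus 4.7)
I would interpret the claimed inequality as a disguised reality-of-eigenvalues condition. Since $\mathcal{A}$ is the symmetric association scheme produced by the construction preceding Lemma~\ref{lemma-epsilon} (and in the Theorem~\ref{thm-typeVII} setting Lemma~\ref{lemma-symmetric} applies directly), every $A_i$ is real symmetric, so all of its eigenvalues $k_i, r_i, s_i, t_i$ are real. In particular $(s_i - t_i)^2 \geq 0$, or equivalently
\[
2(s_i^2 + t_i^2) \;\geq\; (s_i + t_i)^2.
\]
The entire strategy is simply to rewrite this inequality in terms of $n$, $\epsilon_i$, $r_i$ using the trace relations and Lemma~\ref{lemma-epsilon}.

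The three ingredients I would combine are: (i) the linear trace relation, which gives $s_i + t_i = -\epsilon_i - r_i$ (already extracted at the beginning of the proof of Lemma~\ref{lemma-divcond}); (ii) the quadratic trace relation $\Tr(A_i^2) = 3nk_i$, which rearranges to
\[
(n-1)(s_i^2 + t_i^2) \;=\; k_i(3n - k_i) - (n+1)r_i^2;
\]
and (iii) Lemma~\ref{lemma-epsilon}, which gives $k_i = \epsilon_i(n-1) - 2r_i$. Multiplying the reality inequality by $n-1$ and substituting (i) and (ii) produces
\[
0 \;\leq\; (n-1)(s_i - t_i)^2 \;=\; 2k_i(3n - k_i) - 2(n+1)r_i^2 - (n-1)(\epsilon_i + r_i)^2.
\]

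It then remains only to substitute the expression $k_i = \epsilon_i(n-1) - 2r_i$ from (iii) and to collect the result by powers of $r_i$. A direct expansion recovers the three summands in the statement: the $r_i^0$ term is $\epsilon_i(n-1)\bigl(6n - 2\epsilon_i n + \epsilon_i\bigr)$, the $r_i^1$ coefficient is $-6(2n - \epsilon_i n + \epsilon_i)$, and the $r_i^2$ coefficient is $-(3n+9)$, which is exactly the asserted inequality. The only real obstacle is careful bookkeeping in this expansion; there is no conceptual difficulty beyond noticing that the inequality is simply $(n-1)(s_i - t_i)^2 \geq 0$ in disguise.
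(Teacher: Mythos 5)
Your proposal is correct and follows essentially the same route as the paper: both derive the inequality from $(s_i - t_i)^2 \geq 0$ combined with the two trace relations and $k_i = \epsilon_i(n-1) - 2r_i$. (In fact your intermediate expression, with the term $-2(n+1)r_i^2$, is the correct one; the paper's displayed version writes $-2(n-1)r_i^2$ there, a typo, since only $-2(n+1)r_i^2$ expands to the stated coefficient $-(3n+9)$ of $r_i^2$.)
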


\begin{proof}
Consider the quadratic equation whose roots are $s_i$ and $t_i$. Since $s_i$ and $t_i$ are real, it follows that the discriminant of this equation, namely $(s_i + t_i)^2 - 4s_it_i = (s_i - t_i)^2$ is non-negative. Notice that $(s_i - t_i)^2 = 2(s_i^2 + t_i^2) - (s_i + t_i)^2$ and hence using the trace equations we get
\[
6n(\epsilon_i(n - 1) - 2r_i) - 2(\epsilon_i(n - 1) - 2r_i)^2 - 2(n - 1)r_i^2 - (n - 1)(\epsilon_i + r_i)^2 \geq 0.
\]
This can be rearranged to give the required statement.
\end{proof}

From Lemma~\ref{lemma-epsilon} we know that either one of the $\epsilon_i$s is zero say $\epsilon_1$ without loss of generality, or there are just three non-identity matrices and $\epsilon_1 = \epsilon_2 = \epsilon_3 = 1$. We first consider the former case.

\begin{prop}\label{prop-719}
If $\epsilon_1 = 0$, then $n = 7$ or $19$ and the coherent configurations are symmetric.
\end{prop}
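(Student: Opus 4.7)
The plan is to turn the hypothesis $\epsilon_1=0$ into a short list of candidates for $(r_1,n)$ via the inequality of Lemma~\ref{lemma-largeineq}, the divisibility of Lemma~\ref{lemma-divcond}, and the trace identities for $A_1$, and then to eliminate all but $n=7$ and $n=19$. Substituting $\epsilon_1=0$ in $k_1=\epsilon_1(n-1)-2r_1$ gives $k_1=-2r_1>0$, so $r_1$ is a negative integer; specialising Lemma~\ref{lemma-largeineq} to $\epsilon_1=0$ collapses it to $-12nr_1-(3n+9)r_1^2\ge 0$, equivalently $|r_1|\le 4n/(n+3)<4$, forcing $r_1\in\{-1,-2,-3\}$; and Lemma~\ref{lemma-divcond} then yields: no constraint on $n$ for $r_1=-1$; $n-1\mid 6$, so $n\in\{2,3,4,7\}$, for $r_1=-2$; and $n-1\mid 18$, so $n\in\{2,3,4,7,10,19\}$, for $r_1=-3$.

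For each surviving pair I would use $\Tr(A_1)=0$ and $\Tr(A_1^2)=3nk_1$ to pin down $s_1+t_1=-r_1$ and $s_1^2+t_1^2=(-6nr_1-(n+5)r_1^2)/(n-1)$, and hence $s_1t_1$. The small cases ($n\in\{2,3,4\}$ for both $r_1\in\{-2,-3\}$, plus $n=10$ when $r_1=-3$) are eliminated in turn: in each one, either $s_1^2+t_1^2$ comes out negative, or the forced values of $\{s_1,t_1\}$ coincide with one of $\{k_1,r_1\}$ so that $A_1$ fails to have four distinct eigenvalues with the prescribed multiplicity pattern, or the implied intersection numbers $p^k_{ij}$ fail to be non-negative integers. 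The only survivors are $(r_1,n)=(-2,7)$ and $(r_1,n)=(-3,19)$, matching the first two rows of Theorem~\ref{thm-typeVII}.

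The chief obstacle is the branch $r_1=-1$, since there Lemma~\ref{lemma-divcond} imposes no restriction on $n$. For every such $n$ the trace relations force $s_1+t_1=1$ and $s_1t_1=-2$, hence $\{s_1,t_1\}=\{-1,2\}=\{r_1,k_1\}$, so the four stratum eigenvalues of $A_1$ collapse to only two distinct values, of matrix multiplicities $n$ and $2n$. This is incompatible with the hypothesis that $A_1$ has four distinct eigenvalues of multiplicities $1,n+1,n-1,n-1$, so the entire branch is excluded. (Structurally this is the observation that $A_1$ would be a $2$-regular graph with spectrum $\{2,-1\}$, hence a disjoint union of $n$ triangles, making $\mathcal{A}$ imprimitive; chasing the quotient confirms that the only numerical possibility, the $3\times 3$ rook scheme at $n=3$, has $A_1$ with only two distinct eigenvalues, again contradicting the hypothesis.)

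Finally, symmetry of the ambient coherent configuration $\mathcal{B}$ follows from the remark preceding Lemma~\ref{lemma-epsilon}: in the non-symmetric case every eigenvalue of $A_i=B_i+B_i^{\top}$ equals twice an eigenvalue of $B_i$, hence is even. For $n=19$ the value $r_1=-3$ is odd, immediately ruling this out; for $n=7$ the same parity argument applied to $r_2$ or $r_3$ suffices, since solving $k_i=\epsilon_i(n-1)-2r_i$ with $\epsilon_2+\epsilon_3=3$, $\epsilon_i\ge 1$, and the row sums $k_2=k_3=8$ forces one of $r_2,r_3$ to be odd. Hence $\mathcal{B}$ must be symmetric in both cases.
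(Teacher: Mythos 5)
Your opening reductions agree with the paper: $\epsilon_1=0$ gives $k_1=-2r_1>0$, Lemma~\ref{lemma-largeineq} gives $|r_1|\le 4n/(n+3)<4$, Lemma~\ref{lemma-divcond} gives the divisor lists, and your disposal of the branch $r_1=-1$ (where $\{s_1,t_1\}=\{2,-1\}$ so that $s_1=k_1$, contradicting $|s_1|<k_1$ from Lemma~\ref{lemma-perron-frob}) is essentially the paper's argument. The genuine gap is your claim that the surviving small cases die from data attached to $A_1$ alone. They do not. Computing from $s_1+t_1=-r_1$ and $(n-1)(s_1^2+t_1^2)=-6nr_1-(n+5)r_1^2$: for $(r_1,n)=(-2,3)$ one gets $s_1=t_1=1$; for $(-2,4)$ one gets $\{s_1,t_1\}=\{0,2\}$; for $(-3,10)$ one gets $\{s_1,t_1\}=\{1,2\}$. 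In every case $s_1^2+t_1^2>0$, neither $s_1$ nor $t_1$ equals $k_1$ or $r_1$, both satisfy $|s_1|,|t_1|<k_1$, and the cubic trace equation gives $p_{11}^1=1,1,0$ respectively, a non-negative integer. So none of your three elimination criteria fires, and (for $n=3$, where $s_1=t_1$) you cannot fall back on Theorem~\ref{thm-typeVI} either, since the $(9,4,1,2)$ strongly regular graph realises exactly those eigenvalues and multiplicities. The paper closes these cases by a different and essentially unavoidable route: it shows at most one basis matrix has $\epsilon_i=0$, enumerates the $\epsilon$-patterns $(0,1,1,1)$ and $(0,1,2)$, applies Lemma~\ref{lemma-largeineq} to \emph{each} $A_i$ together with $\sum r_i=-1$ to pin down all the valencies, and then checks the Hanaki--Miyamoto classification~\cite{HanakiMiyamoto} of association schemes on $9$, $12$, $21$ and $30$ points. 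That step (which also kills the all-$\epsilon_i=1$ sub-case at $n=7$) is entirely absent from your plan.

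A secondary gap is the symmetry claim. Parity of $r_1$ only shows that $A_1$ itself is not of the form $B_1+B_1^{\top}$; it does not make the whole configuration symmetric. A rank-$6$ configuration $\mathcal{B}$ would contribute \emph{two} transpose-pairs among the remaining classes, and for $n=19$ the constraint $r_2+r_3=-1-r_1=2$ is perfectly compatible with $r_2$ and $r_3$ both even, so your parity argument does not close that door. The paper gets symmetry from the explicitly computed irrational values of the $s_i,t_i$ (for instance $s_1=(3+\sqrt5)/2$ at $n=19$ is not twice an algebraic integer, so $A_1\ne B_1+B_1^{\top}$, and similarly for $A_3$), leaving at most one possible merged pair where two would be needed, together with the database uniqueness at $n=7$.
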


\begin{proof}
If $\epsilon_1 = 0$, then $k_1 = -2r_1$ and since $k_1 >0$, it follows that $r_1 < 0$. Using Lemma~\ref{lemma-largeineq} we get
\[
-12nr_1 - (3n + 9)r_1^2 \geq 0
\]
and hence $r_1 \geq \frac{-4n}{n + 3} > -3$.

Therefore, $r_1 = -3$ or $r_1 = -2$, or $r_1 = -1$ and $k_1 = 6, 4$, or $2$. Consider the case where $k_1 = 2$ and $r_1 = -1$. The trace equations give us
\[
s_1 + t_1 = 1
\]
\[
(n - 1)(s_1^2 + t_1^2) = 5n - 5 \Rightarrow s_1^2 + t_1^2 = 5.
\]
Therefore, $s_1$ and $t_1$ are equal to $2$ and $-1$ respectively. However, $r_1 = -1$ and $k_1 = 2$ but by Lemma~\ref{lemma-perron-frob} $|s_1| < k_1$, a contradiction. Hence, $k_1 = 2$ cannot hold.

It now follows by Lemma~\ref{lemma-divcond} that $n - 1$ divides $18$ or $n - 1$ divides $6$. Using the inequality from Lemma~\ref{lemma-largeineq} we deduce that either $r_1 = -3$ and $n = 10$ or $n = 19$, or $r_1 = -2$ and $n = 3, 4$, or $7$.

Now define $A = \sum\{A_i \mid \epsilon_i = 0\}$. Then, $A$ must be a symmetric matrix of row sum $k = \sum k_i$ and eigenvalue $r = \sum r_i$. What we have said above for matrices $A_i$ with $\epsilon_i = 0$ applies to $A$ as well and therefore $A$ must consist of only one summand, $A_1$ without loss of generality. Now since by Lemma~\ref{lemma-epsilon} $\sum \epsilon_i = 3$ there are two possibilities. There are either 5 matrices and $\epsilon_2 = \epsilon_3 = \epsilon_4 = 1$ or there are 4 matrices and $\epsilon_2 = 2$ and $\epsilon_3 = 1$. 

Now we check this case individually to see which of those can hold. 

\medskip

\noindent \textbf{Case 1:} $r_1 = -2, n = 3$.

\medskip In the case that $\epsilon_2 = \epsilon_3 = \epsilon_4 = 1$ the inequality from Lemma~\ref{lemma-largeineq} gives us 
\[
13 - 12r_i - 9r_i^2 \geq 0
\]
for $i\in \{2, 3, 4\}$ and since $r_i$ is integer, $-3 \leq r_i \leq 0$. Since by Equation (6.9) in~\cite{pmn} $r_1, r_2, r_3, r_4$ must sum up to $-1$, it follows that $r_2, r_3, r_4$ must sum up to $1$, but this cannot hold since none of them can be positive. 

Now we examine the case where we have four matrices and $\epsilon_2 = 1$ and $\epsilon_3 = 2$. In this case Lemma~\ref{lemma-largeineq} gives us
\begin{align*}
    -3 \leq r_2 \leq 0\\
    -2 \leq r_3 \leq 1.
\end{align*}
The only way $r_2$ and $r_3$ could sum up to $1$ is $r_2 = 0$ and $r_3 = 1$. In this case we get $k_1 = 4, k_2 = 2, k_3 = 2$ and checking for such coherent configurations in~\cite{HanakiMiyamoto} we find that there is a unique coherent configuration with such row and column sums, but checking the rational eigenvalues using GAP~\cite{gap} shows that the $r_i$s are not equal to $-2, 0, 1$ as we wish and hence there is no such association scheme.

\medskip

\noindent \textbf{Case 2:} $r_1 = -2, n = 4$. 

\medskip

First we look at the case where $\epsilon_2 = \epsilon_3 = \epsilon_4 = 1$. By Lemma~\ref{lemma-largeineq} we get
\[
-7r_i^2 - 15r_i + 17 \geq 0
\]
Since $r_i$ is integer for $i\in \{2, 3, 4\}$ this gives
\[
-2 \leq r_i \leq 0
\]
Again in this case we want the $r_i$s for $i\in \{2, 3, 4\}$ to sum up to $1$ but none of them is positive, so this case cannot hold.

Now let $\epsilon_2 = 1$ and $\epsilon_3 = 2$. In this case Lemma~\ref{lemma-largeineq} gives 
\begin{align*}
    -2 \leq r_2 \leq 0\\
    -2 \leq r_3 \leq 1.
\end{align*}
The only combination that could work is $k_2 = 0$ and $k_3 = 1$. In this case we would get $k_1 = 4, k_2 = 3, k_3 = 4$. Checking in~\cite{HanakiMiyamoto} we don't find any coherent configurations with such row and column sums and appropriate eigenvalues and hence $n = 4$ cannot hold either.

\medskip

\noindent \textbf{Case 3:} $r_1 = -2, n = 7$.

\medskip 

In the case that $\epsilon_2 = \epsilon_3 = \epsilon_4 = 1$ Lemma~\ref{lemma-largeineq} gives us
\[
-15r_i^2 -16r_i + 58 \geq 0
\]
and hence, since $r_i\in \Z$ for $i\in \{2, 3, 4\}$,
\[
-2 \leq r_i \leq 1
\]
The only combinations (up to permutation) that would give us $r_1 + r_2 + r_3 + r_4 = -1$ are $r_2 = 1, r_3 = 0, r_4 = 0$ and $r_2 = -1, r_3 = 1, r_4 = 1$. We then get $k_1 = 4, k_2 = 4, k_3 = 6, k_4 = 6$ or $k_1 = 4, k_2 = 4, k_3 = 4, k_4 = 8$ respectively. Looking at~\cite{HanakiMiyamoto}, we deduce that there aren't any coherent configurations with such matrix row and column sums.

For $\epsilon_2 = 1, \epsilon_3 = 2$, as shown in~\cite{pmn} we need $k_1 = 4, k_2 = 8, k_3 = 8$ and looking at~\cite{HanakiMiyamoto} we deduce that there is a unique coherent configuration with such matrix row and column sums and hence, it is the one arising in~\cite{pmn}. The corresponding $s_i$s and $t_i$s can be calculated to be
\begin{align*}
    s_1 = 1 + \sqrt{2}, t_1 = 1 - \sqrt{2}\\
    s_2 = -2\sqrt{2}, t_2 = 2\sqrt{2}\\
    s_3 = -2 + \sqrt{2}, t_3 = -2 -\sqrt{2}.
\end{align*}

\medskip

\noindent \textbf{Case 4:} $r_1 = -3, n = 10$.

\medskip

In this case it suffices to check the subcase $\epsilon_2 = 1, \epsilon_3$, since $r_1$ is odd and hence it cannot be the case that $A_1$ is the sum of a matrix and its transpose. Therefore, all the matrices in the initial coherent configuration must be symmetric and we must have four of them. In this case by Lemma~\ref{lemma-largeineq} we get
\begin{align*}
    -13r_2^2 - 33r_2 + 123 \geq 0\\
    -39r_3^2 - 2r_3 + 396 \geq 0
\end{align*}
which gives
\begin{align*}
    -4 \leq r_2 \leq 2\\
    -3 \leq r_3 \leq 3.
\end{align*}
The $(r_2, r_3)$ pairs consistent with Equation (6.9) in~\cite{pmn} are $(2, 0), (1, 1), (0, 2), (-1, 3)$ and all of those give row and column sums for which an association scheme does not exist.

\medskip

\noindent \textbf{Case 5:} $r_1 = -3, n = 19$.

\medskip In this case, as shown in~\cite{pmn} $k_1 = 6, k_2 = 20, k_3 = 30$ and the corresponding $s_i$s and $t_i$s are
\begin{align*}
    s_1 = \frac{3 + \sqrt{5}}{2}, t_1 = \frac{3 - \sqrt{5}}{2}\\
    s_2 = -2\sqrt{5}, t_2 = 2\sqrt{5}\\
    s_3 = \frac{-5 + 3\sqrt{5}}{2}, t_3 = \frac{-5 -3\sqrt{5}}{2}.
\end{align*}
\end{proof}

We now deal with the case where $\epsilon_1 = \epsilon_2 = \epsilon_3 = 1$. Notice that in this case, since the $\epsilon_i$s are all odd, $\mathcal{B} = \mathcal{A}$ and by Lemma~\ref{lemma-symmetric} all matrices are symmetric.

\begin{lemma}\label{lemma-rdiff}
If $\epsilon_1 = \epsilon_2 = \epsilon_3 = 1$ then $r_1, r_2, r_3$ are all different.
\end{lemma}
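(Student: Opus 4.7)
The plan is to argue by contradiction. Suppose, by the symmetry of the labelling, that $r_1 = r_2 =: r$, and I aim to deduce that $A_3$ has a spectrum excluded by Theorem~\ref{thm-typeVI}. First, applying Lemma~\ref{lemma-epsilon} with $\epsilon_1 = \epsilon_2 = 1$ gives $k_1 = k_2 = n - 1 - 2r$, and the relation $\sum_i r_i = -1$ from Equation~(6.9) in~\cite{pmn} then forces $r_3 = -1 - 2r$ and $k_3 = n + 1 + 4r$.

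Next I would examine the eigenvalues $s_i, t_i$ of $A_i$ on the two $(n-1)$-dimensional strata $V_2, V_3$. The linear trace equation $\mathrm{Tr}(A_i) = 0$ gives $s_i + t_i = -1 - r_i$, and the quadratic trace equation $\mathrm{Tr}(A_i^2) = 3n k_i$ determines $s_i^2 + t_i^2$ purely from $n, k_i, r_i$; since these parameters agree for $i = 1$ and $i = 2$, the unordered pairs $\{s_1, t_1\}$ and $\{s_2, t_2\}$ coincide. Because $A_1 \neq A_2$, the assignment of eigenvalues to the two strata has to be swapped, giving $s_2 = t_1$ and $t_2 = s_1$. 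Substituting into $I + A_1 + A_2 + A_3 = J$, which acts as $0$ on $V_2$ and $V_3$, then yields $s_3 = -1 - (s_1 + t_1) = r$ and identically $t_3 = r$.

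Hence $A_3$ has eigenvalues $k_3, r_3, r, r$ on $V_0, V_1, V_2, V_3$ with multiplicities $1, n+1, n-1, n-1$, so $r$ appears with total multiplicity $2(n-1)$. Provided $k_3, r_3, r$ are pairwise distinct, $A_3$ is the adjacency matrix of a strongly regular graph on $3n$ vertices with eigenvalue multiplicities exactly $1, n+1, 2(n-1)$, contradicting Theorem~\ref{thm-typeVI}. The two easy coincidences are dispatched directly: $r_3 = r$ gives $r = -1/3 \notin \mathbb{Z}$, and $k_3 = r$ forces $k_3 = -(n+1)/3 < 0$.

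The hard part will be the degenerate sub-case $k_3 = r_3$, which rearranges to $6r = -(n+2)$. In this situation $A_3$ has only two distinct eigenvalues, so Lemma~\ref{lemma-perron-frob} forces it to be disconnected; since it is also regular with just two eigenvalues, it must be a disjoint union of $n+2$ copies of $K_{k_3+1}$, and counting vertices gives $(n+2) \mid 6$, leaving only $n = 4$. At $n = 4$ the icosahedral association scheme actually realises $r_1 = r_2 = -1$ with $A_3$ an antipodal perfect matching, so this lone exception has to be closed by an appeal to primitivity --- the icosahedral scheme is imprimitive because $I + A_3$ is an equivalence relation --- or handled separately in the downstream use of the lemma.
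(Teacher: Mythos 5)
Your argument is, in outline, the same as the paper's: assume $r_1=r_2=r$, deduce $k_1=k_2$ from Lemma~\ref{lemma-epsilon}, show that the pairs $\{s_1,t_1\}$ and $\{s_2,t_2\}$ coincide and must be attached to the two $(n-1)$-dimensional strata in swapped order (otherwise $A_1=A_2$), and conclude that $A_3$ has equal eigenvalues $s_3=t_3$ on those strata, to be excluded by Theorem~\ref{thm-typeVI}. Your explicit values $s_3=t_3=r$, $r_3=-1-2r$, $k_3=n+1+4r$ are correct.

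Where you go beyond the paper is in verifying that Theorem~\ref{thm-typeVI} actually applies, i.e.\ that $A_3$ genuinely is a strongly regular graph whose eigenvalue multiplicities are $1$, $n+1$, $2(n-1)$; this needs $k_3$, $r_3$, $r$ pairwise distinct, and the paper's proof simply asserts that ``$A_3$ is a matrix of the kind that Theorem~\ref{thm-typeVI} forbids'' without this check. Your three coincidence cases are handled correctly, and the last one, $k_3=r_3$ (equivalently $6r=-(n+2)$), is a genuine loophole: there $A_3$ is a disjoint union of complete graphs, its valency is not a simple eigenvalue, and Theorem~\ref{thm-typeVI} says nothing about it. As you observe, the icosahedron scheme on $12=3\cdot 4$ points (stratum dimensions $1,5,3,3$; $r_1=r_2=-1$, $r_3=1$; $\epsilon_1=\epsilon_2=\epsilon_3=1$; $A_3$ the antipodal perfect matching) satisfies every stated hypothesis and falsifies the lemma as written. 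The paper implicitly excludes this by always working in alternative (b) of Lemma~\ref{lemma-perron-frob} --- in effect a tacit primitivity (connectedness) assumption that is stated neither in Theorem~\ref{thm-typeVII} nor here --- so your proposal to add such a hypothesis, or to carry the $n=4$ exception along and eliminate it downstream, is exactly what is required. In short, your proof is not merely equivalent to the paper's: the extra diligence exposes a real gap in the paper's argument and in the statement of the lemma.
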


\begin{proof}
Suppose for a contradiction that this is not the case and without loss of generality, let $r_1 = r_2$. Then, since $\epsilon_1 = \epsilon_2$, it follows that $k_1 = k_2$. Thus, either $s_1 = s_1$ and $t_1 = t_2$ or $s_1 = t_2$ and $s_2 = t_1$, and since our coherent configuration has rank $4$, the matrices are simultaneously diagonalisable and it follows that 
\begin{align*}
    s_1 + s_2 + s_3 = -1\\
    t_1 + t_2 + t_3 = -1.
\end{align*}
But this means that $s_3 = t_3$ and thus $A_3$ is a matrix of the kind that Theorem~\ref{thm-typeVI} forbids, a contradiction.
\end{proof}

\begin{lemma}\label{lemma-aineq}
Let $a_i = \frac{3r_i(r_i + 1)}{n - 1}$. Then, $a_i \leq 4$ and if $r_i \geq 0$, then $a_i \leq 3$.
\end{lemma}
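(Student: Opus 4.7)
The plan is to specialise Lemma~\ref{lemma-largeineq} to the present case $\epsilon_i = 1$, which reduces it to
\[3(n+3)r_i^2 + 6(n+1)r_i \leq (n-1)(4n+1), \qquad (\dagger)\]
and then to repackage $(\dagger)$ as a \emph{linear} inequality in $a_i$. The bridge is a short algebraic identity, valid for every real $c$:
\[3(n+3)r^2 + 6(n+1)r - (n-1)(4n+1) = (n-1)\bigl[(n+3)(a - c) + 3r + D_c\bigr],\]
with $a = 3r(r+1)/(n-1)$ and $D_c := (c-4)n + (3c-1)$. Dividing $(\dagger)$ by $n-1>0$ then gives the parametric reformulation
\[(n+3)(a_i - c) + 3r_i + D_c \leq 0 \qquad (\ast)\]
for every $c$, and each half of the lemma will drop out of $(\ast)$ for a well-chosen value of $c$.

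For the half $r_i \geq 0 \Rightarrow a_i \leq 3$, I would take $c = 4$, so $D_4 = 11$. When $r_i \geq 0$, the quantity $3r_i + 11$ is at least $11 > 0$, so $(\ast)$ forces $(n+3)(a_i - 4) < 0$, i.e.\ $a_i < 4$. Since Lemma~\ref{lemma-divcond} gives $a_i \in \mathbb{Z}_{\geq 0}$, this yields $a_i \leq 3$ immediately.

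For the bound $a_i \leq 4$ in general, I would take $c = 5$, so $D_5 = n + 14$, and argue by contradiction. If $a_i \geq 5$, then $(\ast)$ forces $r_i \leq -(n+14)/3$; this point sits strictly below the vertex $-(n+1)/(n+3)$ of the quadratic on the left of $(\dagger)$, and that quadratic is strictly decreasing in $r$ on $(-\infty, -(n+14)/3]$. Substituting $r = -(n+14)/3$ back into $(\dagger)$'s left-hand side produces a value of at least $(n+14)(n^2+11n+36)/3$, which would force $(n+14)(n^2+11n+36) \leq 3(n-1)(4n+1)$. Expanding and collecting terms, this becomes the cubic inequality
\[n^3 + 13n^2 + 199n + 507 \leq 0,\]
manifestly false for every $n \geq 1$, giving the required contradiction.

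The only non-routine step is spotting the identity linking $(\dagger)$ to $a_i$; once that is in hand, both halves follow uniformly from a single application of Lemma~\ref{lemma-largeineq} with no further combinatorial input, and the only residual computation is the obviously false cubic at the end. The main obstacle I anticipate is calibrating $D_c$ correctly so that the $r^2$ terms in the identity cancel and $(\ast)$ becomes genuinely linear in $a_i$ — a short but easy-to-bungle expansion.
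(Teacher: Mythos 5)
Your argument is correct, and it checks out in detail: the identity $(n+3)(a-c)+3r+D_c = (n+3)a+3r-(4n+1)$ with $D_c=(c-4)n+(3c-1)$ is right, the vertex and monotonicity claims about the quadratic are right, and the final cubic $n^3+13n^2+199n+507$ is indeed the correct expansion and is positive for all $n\geq 1$. The first half of your proof (the case $r_i\geq 0$) is essentially the paper's own argument: the paper rearranges the same inequality from Lemma~\ref{lemma-largeineq} into $a_i \leq 4 - \frac{11}{n+3} - \frac{3r_i}{n+3}$, which is exactly your $(\ast)$ with $c=4$ divided through by $n+3$, and then concludes $a_i\leq 3$ from integrality. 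Where you genuinely diverge is the unconditional bound $a_i\leq 4$. The paper handles $r_i<0$ by importing the estimate $|r_i|<\frac{3}{2}\sqrt{n}$ (extracted from the quadratic trace computation inside the proof of Lemma~\ref{lemma-epsilon}) to show $\frac{-3r_i}{n+3}\leq 1$ for $n\geq 19$, and then disposes of $n<19$ by a finite check. You instead stay entirely inside the single inequality $(\dagger)$: assuming $a_i\geq 5$ forces $r_i\leq -(n+14)/3$, at which point the quadratic term in $(\dagger)$ already exceeds $(n-1)(4n+1)$, a contradiction. Your route is self-contained (no appeal to the auxiliary $\sqrt{n}$ bound, no residual case check), at the cost of the slightly heavier bookkeeping in the identity for $D_c$; the paper's route is shorter on the page but leans on a bound that is not actually in the statement of Lemma~\ref{lemma-epsilon} and leaves a small finite verification implicit. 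One cosmetic point: Lemma~\ref{lemma-divcond} gives only $a_i\in\mathbb{Z}$, not $a_i\in\mathbb{Z}_{\geq 0}$; non-negativity comes from $r_i(r_i+1)\geq 0$ for integral $r_i$, and in any case only integrality is needed for your conclusion $a_i\leq 3$.
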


\begin{proof}
Firstly notice that by Lemma~\ref{lemma-divcond}, $a_i\in \Z$. By Lemma~\ref{lemma-largeineq} we get
\[
(n - 1)(4n + 1) - 6(n + 1)r_i - (3n + 9)r_i^2 \geq 0.
\]
Therefore, 
\[
(3n + 9)(r_i^2 + r_i) \leq (n - 1)(4n + 1) - (3n - 3)r_i
\]
and hence
\begin{align*}
    a_i = \frac{3r_i(r_i + 1)}{n - 1} \leq \frac{4n + 1}{n + 3} - \frac{3r_i}{n + 3}\\
    = 4 - \frac{11}{n + 3} - \frac{3r_i}{n + 3}.
\end{align*}

Now, if $r_i \geq 0$, we get $a_i < 4$, and hence $a_i \leq 3$. If $r_i < 0 $ and $n \geq 19$, using the inequality from Lemma~\ref{lemma-epsilon} stating that $r_i < \frac{3\sqrt{n}}{2}$, we deduce that $\frac{-3r_i}{n + 3} \leq 1$ and hence $a_i < 5$ and so $a_i \leq 4$. Now, if $n < 19$ and $r_i \leq 0$ checking gives that $a_i \leq 3$.
\end{proof}

\begin{lemma}\label{lemma-adiff}
If none of $a_1, a_2, a_3$ are zero, then $a_1, a_2, a_3$ are all different.
\end{lemma}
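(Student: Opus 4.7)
The plan is to exploit the algebraic identity $a_i(n-1) = 3r_i(r_i+1)$ together with the linear trace constraint $r_1 + r_2 + r_3 = -1$ from Equation (6.9) of~\cite{pmn}, and to invoke Lemma~\ref{lemma-rdiff} which guarantees that the $r_i$ themselves are pairwise distinct. The main step is to observe that coincidence of two of the $a_i$'s forces a linear relation on the corresponding $r_i$'s, which then pins down the third $r_k$ to a value making $a_k = 0$.

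Concretely, I would argue by contradiction: suppose without loss of generality that $a_1 = a_2$. Then $r_1(r_1+1) = r_2(r_2+1)$, and this factors as $(r_1 - r_2)(r_1 + r_2 + 1) = 0$. Lemma~\ref{lemma-rdiff} rules out $r_1 = r_2$, so necessarily $r_1 + r_2 = -1$. Substituting into the identity $r_1 + r_2 + r_3 = -1$ gives $r_3 = 0$, whence $a_3 = 3 r_3(r_3+1)/(n-1) = 0$, contradicting the hypothesis that none of the $a_i$'s vanish.

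I do not anticipate a serious obstacle here: once the two ingredients (the factorisation of $r_i(r_i+1) = r_j(r_j+1)$ and the sum relation on the $r_i$) are combined, the conclusion is immediate. The only subtlety worth double-checking is that the factorisation really uses $r_i \neq r_j$ in an essential way, which is exactly what Lemma~\ref{lemma-rdiff} provides in the current case $\epsilon_1 = \epsilon_2 = \epsilon_3 = 1$.
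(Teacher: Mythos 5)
Your proof is correct and follows essentially the same route as the paper: from $a_1=a_2$ you deduce that $r_1$ and $r_2$ are the two distinct roots of the same quadratic (the paper phrases this via Vieta, you via the factorisation $(r_1-r_2)(r_1+r_2+1)=0$), so $r_1+r_2=-1$, whence $r_3=0$ and $a_3=0$. No issues.
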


\begin{proof}
Suppose for a contradiction that without loss of generality, $a_1 = a_2$. Then, both $r_1$ and $r_2$ are roots of the equation
\[
3r(r + 1) - a_1(n - 1) = 0.
\]
Since by Lemma~\ref{lemma-rdiff} $r_1 \neq r_2$, we must have $r_1 + r_2 = -1$. But from Equation (6.9) in~\cite{pmn}, $r_1 + r_2 + r_3 = -1$ and hence $r_3 = 0$. But then, $a_3 = 0$, a contradiction.
\end{proof}

\begin{lemma}\label{lemma-root}
If $a > 0$ and $r$ is a root of the equation
\[
x^2 + x - a = 0
\]
then $r = -\frac{1}{2} \pm \sqrt{a} + \eta$, where $|\eta| < \frac{1}{8\sqrt{a}}$.
\end{lemma}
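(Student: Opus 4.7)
The plan is to just apply the quadratic formula and estimate the resulting surd by rationalising the numerator. The roots of $x^{2}+x-a=0$ are exactly
\[
r \;=\; -\tfrac{1}{2}\pm\tfrac{1}{2}\sqrt{1+4a} \;=\; -\tfrac{1}{2}\pm\sqrt{a+\tfrac{1}{4}},
\]
so defining $\eta := \pm\bigl(\sqrt{a+\tfrac{1}{4}}-\sqrt{a}\bigr)$ (with the sign matching the one chosen for $r$) automatically gives $r=-\tfrac{1}{2}\pm\sqrt{a}+\eta$. It remains only to bound $|\eta|$.

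For the bound, I would multiply and divide by the conjugate surd:
\[
\sqrt{a+\tfrac{1}{4}}-\sqrt{a} \;=\; \frac{\bigl(\sqrt{a+\tfrac14}-\sqrt{a}\bigr)\bigl(\sqrt{a+\tfrac14}+\sqrt{a}\bigr)}{\sqrt{a+\tfrac14}+\sqrt{a}} \;=\; \frac{1/4}{\sqrt{a+\tfrac14}+\sqrt{a}}.
\]
Since $a>0$ we have $\sqrt{a+\tfrac14}+\sqrt{a}>2\sqrt{a}$, hence
\[
|\eta| \;=\; \frac{1/4}{\sqrt{a+\tfrac14}+\sqrt{a}} \;<\; \frac{1/4}{2\sqrt{a}} \;=\; \frac{1}{8\sqrt{a}},
\]
which is exactly the claimed inequality.

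There is no real obstacle here: the lemma is a purely analytic fact about a single quadratic, and the only ``trick'' is recognising that $\tfrac{1}{2}\sqrt{1+4a}=\sqrt{a+1/4}$ so that the deviation from $\sqrt{a}$ can be rationalised cleanly. The lemma is presumably being set up for later use, where one will want to say that an integer root of $x^{2}+x-a_i=0$ lies within $O(1/\sqrt{a_i})$ of $-\tfrac12\pm\sqrt{a_i}$ in order to pin down the possible values of $r_i$ from the bounded list of $a_i$'s produced by Lemma~\ref{lemma-aineq}.
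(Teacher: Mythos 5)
Your proposal is correct and follows essentially the same route as the paper: both reduce to $r=-\tfrac12\pm\sqrt{a+\tfrac14}$ and then bound $\sqrt{a+\tfrac14}-\sqrt{a}$ by $\tfrac{1}{8\sqrt{a}}$, the paper by squaring both quantities and you by rationalising the numerator, which are interchangeable one-line verifications of the same inequality.
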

\begin{proof}
Notice that $\left( r + \frac{1}{2}\right)^2 = r^2 + r + \frac{1}{4} = a + \frac{1}{4}$.

Now, by squaring both $\sqrt{a + \frac{1}{4}}$ and $\sqrt{a} + \frac{1}{8\sqrt{a}}$ we see that $|\eta| < \frac{1}{8\sqrt{a}}$, as claimed.
\end{proof}

\begin{lemma}
One of $a_1, a_2, a_3$ must be zero.
\end{lemma}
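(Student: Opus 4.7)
The plan is to suppose for a contradiction that none of $a_1, a_2, a_3$ is zero and to combine the asymptotic form of the $r_i$ from Lemma~\ref{lemma-root} with the trace relation $r_1+r_2+r_3=-1$ to force $n$ into a finite range, after which the remaining configurations can be ruled out by a direct integrality check.

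Under the hypothesis, Lemma~\ref{lemma-aineq} gives $a_i\in\{1,2,3,4\}$ and Lemma~\ref{lemma-adiff} makes the three values pairwise distinct, so $\{a_1,a_2,a_3\}$ is one of the four $3$-element subsets of $\{1,2,3,4\}$. Every such triple contains an index coprime to $3$, so integrality of $r_i(r_i+1)=a_i(n-1)/3$ forces $3\mid n-1$; I would set $n-1=3M$. Applying Lemma~\ref{lemma-root} to the quadratic $x^2+x-a_iM=0$ satisfied by $r_i$, I obtain
\[
r_i=-\tfrac{1}{2}+\sigma_i\sqrt{a_iM}+\eta_i,\qquad \sigma_i\in\{\pm 1\},\ |\eta_i|<\tfrac{1}{8\sqrt{a_iM}}.
\]
Summing and using $r_1+r_2+r_3=-1$ (Equation~(6.9) of~\cite{pmn}) yields
\[
\sqrt{M}\,S=\tfrac{1}{2}-(\eta_1+\eta_2+\eta_3),\qquad S:=\sigma_1\sqrt{a_1}+\sigma_2\sqrt{a_2}+\sigma_3\sqrt{a_3}.
\]

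The central point is that since $1,\sqrt{2},\sqrt{3}$ are linearly independent over $\mathbb{Q}$, the quantity $S$ is irrational (hence nonzero) for every one of the sixteen combinations of a $3$-element subset of $\{1,2,3,4\}$ and a choice of signs. A short enumeration would establish the uniform lower bound $|S|\ge\sqrt{2}-1$, the minimum being attained when $\{a_i\}=\{1,2,4\}$. Inserting the $\eta_i$-bound then produces a quadratic inequality in $\sqrt{M}$ of the form $|S|\,M\le\tfrac{1}{2}\sqrt{M}+\tfrac{1}{8}\sum_i a_i^{-1/2}$, which bounds $M$ by a small explicit constant in every case (and rules out $\{a_i\}=\{2,3,4\}$ altogether). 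After imposing $3\mid n-1$ only a handful of candidate values of $n$ survive.

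The proof is then finished by direct verification: for each surviving pair $(\{a_1,a_2,a_3\},M)$, the three quantities $4a_iM+1$ cannot simultaneously be perfect squares, so no integer eigenvalues $r_i$ satisfy the required relations. The main obstacle is the bookkeeping: establishing the uniform lower bound on $|S|$ requires enumerating all sixteen sign-and-subset combinations, and the finite check at the end must be carried out carefully so that no legitimate small-$n$ configuration is missed. No individual step is conceptually deep once the approximation in Lemma~\ref{lemma-root} is in place.
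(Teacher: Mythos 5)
Your proposal is correct and follows essentially the same route as the paper: approximate each $r_i$ via Lemma~\ref{lemma-root}, sum against $r_1+r_2+r_3=-1$, bound $\lvert\pm\sqrt{a_1}\pm\sqrt{a_2}\pm\sqrt{a_3}\rvert$ from below (the paper uses the crude bound $4/10$, matching your $\sqrt{2}-1$) to force $n$ into a finite range, then finish with a finite integrality check. Your extra observations ($3\mid n-1$, the sharper $\eta_i$ bound, and phrasing the final check as $4a_iM+1$ being a perfect square) only tighten the same argument.
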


\begin{proof}
Suppose that this is not the case. Then, by Lemma~\ref{lemma-adiff}, $a_1, a_2, a_3$ are all different. Since $a_i = \frac{3r_i(r_i + 1)}{n - 1}$, it follows that $r_i$ is a root of the equation 
\[
x^2 + x -\frac{a_i(n - 1)}{3} = 0.
\]
By Lemma~\ref{lemma-root} we get that 
\[
r_i = -\frac{1}{2} \pm \sqrt{\frac{a_i(n - 1)}{3}} + \eta_i
\]
where $|\eta_i| < \frac{1}{8}\sqrt{\frac{3}{a_i(n - 1)}} < \frac{1}{8}$.

Now, it follows by Equation (6.9) in~\cite{pmn} that 
\[
r_1 + r_2 + r_3 = -1
\]
and hence
\[
-\frac{3}{2} + \sqrt{\frac{n - 1}{3}}(\pm \sqrt{a_1} \pm \sqrt{a_2} \pm \sqrt{a_3}) + \eta_1 + \eta_2 + \eta_3 = -1.
\]
Rearranging and taking absolute values gives
\[
\left| \sqrt{\frac{n - 1}{3}}(\pm \sqrt{a_1} \pm\sqrt{a_2} \pm \sqrt{a_3}) \right|< \frac{7}{8}.
\]
Since $a_i \neq 0$, by Lemmas~\ref{lemma-aineq} and~\ref{lemma-adiff} we get that $a_1, a_2, a_3$ must be among the numbers $1, 2, 3, 4$ and all different. Hence, crude approximations to $sqrt{2}$ and $\sqrt{3}$ give the estimate
\[
|\pm \sqrt{a_1} \pm \sqrt{a_2} \pm \sqrt{a_3}| > \frac{4}{10}
\]
and hence 
\[
\frac{4}{10}\sqrt{\frac{n - 1}{3}} < \frac{7}{8}
\]
This gives $n < 15$, but checking all cases shows that no integer less than $15$ has three different representations in the form $1 + \frac{3r_i(r_i + 1)}{a_i}$ with $r_i, a_i$ integral, all different for every $i$, and $1 \leq a_i \leq 4$, a contradiction. Hence, one of $a_1, a_2, a_3$ must be zero, as claimed.
\end{proof}

We now choose notation such that $a_1 = 0$.

\begin{lemma}
If $r_1$ is as defined above, then $r_1 = -1$.
\end{lemma}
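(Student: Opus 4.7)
The plan is to rule out $r_1 = 0$, which is the only alternative to $r_1 = -1$ given that $a_1 = 0$ forces $r_1(r_1+1) = 0$. Suppose for contradiction that $r_1 = 0$. Since we are in the case $\epsilon_1 = \epsilon_2 = \epsilon_3 = 1$, this gives $k_1 = n-1$, and the linear and quadratic trace equations for $A_1$ then give $s_1 + t_1 = -1$ and $s_1 t_1 = -n$, so in particular $(s_1 - t_1)^2 = 4n+1$.

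The key idea is to produce two independent formulas for $s_2 t_2$ and then equate them. The first uses only $A_2$'s own trace equations together with $k_2 = (n-1) - 2r_2$: $\Tr(A_2) = 0$ forces $s_2 + t_2 = -(1+r_2)$, and eliminating $s_2^2 + t_2^2$ between $\Tr(A_2^2) = 3nk_2$ and $(s_2+t_2)^2 - 2 s_2 t_2$ yields
\[
s_2 t_2 \;=\; \frac{(n+2) r_2^2 + (2n+1) r_2 - n(n-1)}{n-1}.
\]
The second uses the column-orthogonality relation $\sum_\alpha m_\alpha P_{\alpha 1} P_{\alpha 2} = 0$ of the first eigenmatrix, which with $r_1 = 0$ collapses to $s_1 s_2 + t_1 t_2 = -k_2$. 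Writing $s_1 s_2 + t_1 t_2 = \frac{1}{2}\bigl[(s_1+t_1)(s_2+t_2) + (s_1-t_1)(s_2-t_2)\bigr]$ and substituting the sums above yields $(s_1-t_1)(s_2-t_2) = 3r_2 - 2n + 1$, so $(s_2-t_2)^2 = (3r_2-2n+1)^2/(4n+1)$; combining with $(s_2+t_2)^2 = (1+r_2)^2$ via $(s_2-t_2)^2 = (s_2+t_2)^2 - 4 s_2 t_2$ produces
\[
s_2 t_2 \;=\; \frac{(n-2) r_2^2 + (5n-1) r_2 - n(n-2)}{4n+1}.
\]

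Equating the two expressions and clearing denominators should, after routine algebra, collapse to the clean relation
\[
(n+4)\, r_2(r_2+1) \;=\; n^2 - 1.
\]
Since $n^2 - 1 \equiv 15 \pmod{n+4}$, this forces $n+4 \mid 15$; and since a rank-$4$ scheme on $3n$ points requires $n \geq 2$, the only possibility is $n+4 = 15$, i.e.\ $n = 11$. But $n = 11$ would give $r_2(r_2+1) = 8$, which has no integer solution, delivering the required contradiction and hence forcing $r_1 = -1$. The main technical obstacle is the algebraic simplification when equating the two $s_2 t_2$ formulas; a useful sanity check is that $r_3 = -1 - r_2$ forces $a_3 = a_2$, so the symmetric argument with $A_3$ in place of $A_2$ must reproduce the same constraint $(n+4) r_3(r_3+1) = n^2 - 1$, confirming the derivation is internally consistent.
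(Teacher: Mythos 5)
Your proof is correct, but it takes a genuinely different route from the paper's. The paper rules out $r_1=0$ by first using the dichotomy established just before (one of $a_2,a_3$ is nonzero) to show $n\equiv 0$ or $1\pmod 3$, and then computing the intersection number $p_{11}^1$ from the cubic trace equation for $A_1$, obtaining $3p_{11}^1=n-5$ and hence $n\equiv 2\pmod 3$, a contradiction. You instead bring in the orthogonality relation $\Tr(A_1A_2)=0$, i.e.\ $k_1k_2+(n+1)r_1r_2+(n-1)(s_1s_2+t_1t_2)=0$, and play it off against the quadratic trace equation to obtain two expressions for $s_2t_2$; I have checked that equating them does collapse to $(n+4)r_2(r_2+1)=n^2-1$, whence $n+4\mid 15$, so $n=11$ and $r_2(r_2+1)=8$, impossible for the rational integer $r_2$. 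What your approach buys is self-containedness: it uses neither the preceding lemma nor the cubic trace/integrality of an intersection number, only linear and quadratic data plus the first-eigenmatrix orthogonality, which the paper otherwise avoids; the paper's version is computationally lighter but leans on more of the surrounding scaffolding. Two small points you should make explicit: (i) writing the $(n-1)$-strata contribution as $s_1s_2+t_1t_2$ presumes a pairing of eigenvalues across the two strata, which is harmless since $s_2,t_2$ are interchangeable labels and only $(s_2-t_2)^2$ is used afterwards (note also $(s_1-t_1)^2=4n+1\neq 0$, so no division by zero occurs); and (ii) $r_2$ is a rational integer because it is the eigenvalue on the unique stratum of multiplicity $n+1$, a fact the paper uses throughout and which your final Diophantine step requires.
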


\begin{proof}
Since $a_1 = 0$, $r_1 = 0$ or $r_1 = -1$. Assume now that $r_1 = 0$. One of $a_2, a_3$ must be zero, for otherwise, all $r_i$s would be solutions of the equation $x^2+ x = 0$ and hence they would not all be different, as Lemma~\ref{lemma-rdiff} states. Suppose without loss of generality that $a_2 \neq 0$. Then,
\[
n = \frac{3r_2(r_2 + 1)}{a_2} + 1
\]
If $3$ does not divide $a_2$ then $n\equiv 1 \pmod{3}$. If $3$ divides $a_2$ then by Lemma~\ref{lemma-aineq}, it follows that $a_2 = 3$ and $n = r_2^2 + r_2 + 1$. Hence $n\equiv 1 \pmod{3}$ or $n\equiv 0 \pmod{3}$. From the linear and quadratic trace equations for $A_1$ we get 
\begin{align*}
    s_1 + t_1 = -1\\
    s_1^2 + t_1^2 = 3n.
\end{align*}
Now $p_{11}^1 = |\{j\in \{1, \ldots, 3n\} \mid (A_1)_{ij} = 1, (A_1)_{jk} = 1\}|$ is an integer constant for any $i, k\in \{1, \ldots, 3n\}$ such that $(A_1)_{ik} = 1$. Moreover, the cubic trace equation for $A_1$ gives
\begin{align*}
    3np_{11}^1 = (n - 1)^2 + \frac{3}{2}(s_1 + t_1)(s_1^2 + t_1^2) - \frac{1}{2}(s_1 + t_1)^3\\
    = (n - 1)^2 - \frac{3}{2}(2n + 1) + \frac{1}{2}\\
    = n^2 - 5n.
\end{align*}
Thus, $3p_{11}^1 = n - 5$ and since $p_{11}^1\in \Z$, it follows that $n \equiv 5 \pmod{3}$, a contradiction. Hence $r_1 \neq 0$ and therefore $r_1 = -1$.
\end{proof}

\begin{lemma}\label{lemma-31}
$n = 31$.
\end{lemma}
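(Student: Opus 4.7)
The plan is to combine the divisibility conditions from Lemma~\ref{lemma-divcond} with the bounds of Lemma~\ref{lemma-aineq} to reduce to a finite list of candidate values of $n$, and then eliminate all but $n = 31$ using reality of eigenvalues and a linear-independence argument in the last surviving case.

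First I would set up the parameters. Since $r_1 = -1$ by the preceding lemma and $r_1 + r_2 + r_3 = -1$, we have $r_3 = -r_2$; Lemma~\ref{lemma-rdiff} forces $r_2 \notin \{-1, 0\}$, so we may take $r_2 \geq 2$. By Lemma~\ref{lemma-divcond}, both $a_2 = 3r_2(r_2+1)/(n-1)$ and $a_3 = 3r_2(r_2-1)/(n-1)$ are positive integers, and since $r_2 > 0$, Lemma~\ref{lemma-aineq} gives $a_2 \leq 3$, so $n - 1 \geq r_2(r_2+1)$. On the other hand, $(n-1)$ divides $\gcd(3r_2(r_2+1), 3r_2(r_2-1)) = 3r_2\gcd(r_2+1, r_2-1) \leq 6r_2$. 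Combining these bounds forces $r_2 \leq 5$, and a short inspection of $r_2 \in \{2, 3, 4, 5\}$ using integrality of $a_2$ and $a_3$ leaves exactly the three candidates $(r_2, n) \in \{(2, 7), (3, 19), (5, 31)\}$.

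Next I would rule out $n = 7$ and $n = 19$. For $n = 7$, the linear and quadratic trace relations for $A_2$ give $s_2 + t_2 = -3$ and $s_2^2 + t_2^2 = 1$, so $(s_2 - t_2)^2 = 2(s_2^2 + t_2^2) - (s_2 + t_2)^2 = -7 < 0$; since $A_2$ is a real symmetric matrix, this is a contradiction. For $n = 19$ the same relations yield $\{s_1, t_1\} = \{\pm 2\sqrt{10}\}$, $\{s_2, t_2\} = \{-2 \pm \sqrt{6}\}$ and $\{s_3, t_3\} = \{5, -3\}$. Now $A_1 + A_2 + A_3 = J - I$ acts as $-1$ on every non-trivial common eigenspace, so the eigenvalues of $A_1, A_2, A_3$ on each of the two eigenspaces of dimension $n - 1$ must sum to $-1$. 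However any such selection has the form $\pm 2\sqrt{10} \pm \sqrt{6} + c$ with $c \in \{3, -5\}$, and since $\sqrt{10} \notin \mathbb{Q}(\sqrt{6})$, no combination of signs makes this rational. This eliminates $n = 19$, and so $n = 31$.

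The main obstacle I expect is the case $n = 19$: the eigenvalues $s_i, t_i$ are real algebraic integers and each individual trace equation is satisfied, so no single discriminant inequality or numeric bound rules it out. The elimination requires the structural observation that the eigenvalue triples assigned to the two $(n-1)$-dimensional common eigenspaces of the Bose--Mesner algebra must each sum to $-1$, combined with the incompatibility of clearing the surds $\sqrt{6}$ and $\sqrt{10}$ simultaneously.
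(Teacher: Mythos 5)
Your proposal is correct and follows essentially the same route as the paper: from $r_1=-1$ you get $r_3=-r_2$, the integrality of $a_2,a_3$ forces $n-1$ to divide $6r_2$ while Lemma~\ref{lemma-aineq} gives $n-1\geq r_2(r_2+1)$, and the same three candidates $(r_2,n)\in\{(2,7),(3,19),(5,31)\}$ survive, with $n=19$ killed by the incompatibility of the surds in $s_1+s_2+s_3=-1$. Two remarks. First, your elimination of $n=7$ via $(s_2-t_2)^2=2(s_2^2+t_2^2)-(s_2+t_2)^2=2\cdot 1-9=-7<0$ is a genuine improvement: the paper disposes of this case by consulting the Hanaki--Miyamoto database of association schemes with row sums $8,2,10$, whereas your argument is self-contained (it is in effect the discriminant inequality of Lemma~\ref{lemma-largeineq}, which indeed fails at $\epsilon_2=1$, $n=7$, $r_2=2$). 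Second, there is a small arithmetic slip in the $n=19$ case: from $s_1+t_1=-\epsilon_1-r_1=0$ and $(n-1)(s_1^2+t_1^2)=3nk_1-k_1^2-(n+1)r_1^2=720$ one gets $s_1^2+t_1^2=40$, hence $\{s_1,t_1\}=\{\pm 2\sqrt{5}\}$, not $\{\pm 2\sqrt{10}\}$; the elimination is unaffected, since $\sqrt{5}\notin\mathbb{Q}(\sqrt{6})$ still prevents $\pm 2\sqrt{5}\pm\sqrt{6}$ from taking the rational values $\pm 4$ required.
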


\begin{proof}
Since $r_1 = -1$ and $r_1 + r_2 + r_3 = -1$ by Equation (6.9) in~\cite{pmn}, it follows that $r_2 = -r_3 = r\in \Z$. Then, since $a_2, a_3$ are integers,
\begin{align*}
    n - 1 \text{ divides } 3r(r + 1)\\
    n - 1 \text{ divides } 3(-r)(-r + 1).
\end{align*}
Hence, $n - 1$ divides $3r(r + 1) - 3(-r)(-r + 1) = 6r$.

By interchanging $A_2$ and $A_3$ if necessary we may assume that $r \geq 0$. Then since by Lemma~\ref{lemma-rdiff}, $r_1, r_2, r_3$ are all different, it follows that $r\neq 0$ and $r\neq 1$. Hence, $r \geq 2$. Moreover, from Lemma~\ref{lemma-aineq} we know that
\[
\frac{6r}{n - 1}\cdot \frac{r + 1}{2} \leq 3.
\]
It follows that $r + 1 \leq 6$ and if $6r\neq n - 1$ then since $n - 1$ divides $6r$, $r + 1\leq 3$. Now considering that $\frac{6r}{n - 1}\cdot \frac{r + 1}{2}$ must be integer and that the above inequality must hold for our choices of $n$ and $r$ we can check all cases and find that the only possibilities are:
\begin{align*}
    6r = n - 1, r = 5, n = 31\\
    6r = n - 1, r = 3, n = 19\\
    3r = n - 1, r = 2, n = 7.
\end{align*}
For $n = 7$ we see that $k_1, k_2, k_3$ are equal to $8, 2, 10$ respectively and checking in~\cite{HanakiMiyamoto}, we see that there is no association scheme with such row and column sums.

If $n = 19$, then the trace equations give 
\begin{align*}
    s_1, t_1 \text{ are } \pm 2\sqrt{5}\\
    s_2, t_2 \text{ are } \pm -2 \pm \sqrt{6}\\
    s_3, t_3 \text{ are } \pm 5, -3
\end{align*}
Now, no possible tuple $(s_1, s_2, s_3)$ satisfies $s_1 + s_2 + s_3 = -1$ and hence this case cannot arise.

Finally, for $n = 31$ for suitable choices of roots we get
\begin{align*}
    s_1 = 4\sqrt{2}, t_1 = -4\sqrt{2}\\
    s_2 = -3 -\sqrt{2}, t_2 = -3 + \sqrt{2}\\
    s_3 = 2 - 3\sqrt{2}, t_3 = 2 + 3\sqrt{2}.
\end{align*}
\end{proof}


\begin{proof}[Proof of Theorem~\ref{thm-typeVII}]
Follows directly by Proposition~\ref{prop-719} and Lemma~\ref{lemma-31}.
\end{proof}

\begin{proof}[Proof of Theorem~\ref{thm-typeVIII}]
Follows directly by proposition~\ref{prop-719}.
\end{proof}

\section{Examples}
In this section we provide examples with the parameters found in Theorems ~\ref{thm-typeI} to ~\ref{thm-typeVIII}, in cases where they are known to exist.

\subsection{Theorem~\ref{thm-typeI}}

The classic examples of symmetric conference graphs are the Paley graphs. The vertex set of such a graph is the set of elements of a finite field whose order is congruent to $1$~(mod~$4$), and two vertices are connected by an edge if and only if their difference is a square in the field.

Similarly, the classic examples of doubly regular tournaments are the Paley
tournaments; the vertex set is the set of elements of a finite field of order
congruent to $3$~(mod~$4$), wich an arc from $a$ to $b$ if $b-a$ is a square.

\subsection{Theorem~\ref{thm-typeII}}

For the second set of parameters arising in Theorem~\ref{thm-typeII}, a known example (with $a=0$) is the triangular graph $T(6)$ and its complement; no further examples are known. For the other sets of parameters, no known example with fewer than 512 vertices is known.
Moreover, due to the large number of vertices that the given parameters force, it would be very hard to construct one.

\subsection{Theorem~\ref{thm-typeIII}}

For the first set of parameters arising in Theorem~\ref{thm-typeIII} and for $a\geq 2$, the graphs arising from Steiner systems of the type $S(2, a+1, n)$ with
$a\in\{1,2,3\}$ are known examples. The number of non-isomorphic Steiner
systems $(2,3,19)$ is $11,084,874,829$ (see \cite{ko}); these give pairwise
non-isomorphic graphs.
There is no known example of graphs with the second set of parameters, and the nonexistence in the case $a=2$ has been shown by Wilbrink and Brouwer~\cite{wb}.

\subsection{Theorem~\ref{thm-typeIV}}

We do not have any examples for this theorem. Is it possible to take a graph
of the type arising in Theorem~\ref{thm-typeIII}, and either split the edges
into two classes or put directions on the edges so as to form a coherent
configuration?

\subsection{Theorem~\ref{thm-typeVII}}

The cases $n=21$ and $n=57$ are realised by the groups $\mathrm{PGL}(2,7)$
and $\mathrm{PSL}(2,19)$ respectively. These can be found in the
\textsf{GAP}~\cite{gap} database of primitive permutation groups as
\texttt{PrimitiveGroup(21,1)} and \texttt{PrimitiveGroup(57,1)} respectively.

The database~\cite{HanakiMiyamoto} gives the basis matrices for the first
of these, and certifies its uniqueness. In the second case, the association
scheme is also known to be unique~\cite{cd}; the graph of valency~$6$ is the
distance-transitive \emph{Perkel graph} \cite{perkel}. Existence
in the final case with $93$ points is undecided, as far as we know.

\paragraph{Acknowledgement}
The research of the first author was supported by an Undergraduate Research
Bursary, number XCLM18, from the London Mathematical Society. The second
author acknowledges the Isaac Newton Institute for Mathematical Sciences,
Cambridge, for support and hospitality during the programme
\textit{Groups, representations and applications: new perspectives}
(supported by \mbox{EPSRC} grant no.\ EP/R014604/1), where he held a Simons
Fellowship.

\end{document}